   \numberwithin{equation}{section}
\journal{ } 
\newtheorem{thm}{Theorem}[section]
\newtheorem{lem}[thm]{Lemma}
\newtheorem{rem}[thm]{Remark}
\newtheorem{ex}[thm]{Example}
\newtheorem{pro}[thm]{Property}
\begin{document}
\begin{frontmatter}
\author{Hongfeng Li}
\ead{lihf728@nenu.edu.cn}
\author{Yong Wang\corref{cor2}}
\ead{wangy581@nenu.edu.cn}
\cortext[cor2]{Corresponding author.}

\address{School of Mathematics and Statistics, Northeast Normal University,
Changchun, 130024, China}

\title{The spectral Einstein functional for the nonminimal de Rham-Hodge operator}
\begin{abstract}
In this paper, we give the definitions of the non-self-adjoint spectral triple and its spectral Einstein functional. We compute the spectral Einstein functional associated with the nonminimal de Rham-Hodge operator on even-dimensional compact manifolds without boundary. Finally, several examples of the non-self-adjoint spectral triple are listed.
\end{abstract}
\begin{keyword} The nonminimal de Rham-Hodge operator; non-self-adjoint spectral Einstein functional; noncommutative residue.

\end{keyword}
\end{frontmatter}
\section{Introduction}
 Until now, many geometers have studied the noncommutative residue, which is of great importance to the study of noncommutative geometry. Connes showed that the noncommutative residue on a compact manifold $M$ coincided with the Dixmier's trace on pseudo-differential operators of order $-{\rm {dim}}M$ \cite{Co2}. Hence, the noncommutative residue can be used as integral of noncommutative geometry and become an important tool of noncommutative geometry. In \cite{Co1}, Connes used the noncommutative residue to derive a conformal 4-dimensional Polyakov action analogy.
A few years ago, Connes made a challenging observation that the noncommutative residue of the square of the inverse of the Dirac operator was proportional to the Einstein-Hilbert action, which is called the Kastler-Kalau-Walze type theorem. Kastler \cite{Ka} gave a brute-force proof of this theorem. In \cite{KW}, Kalau and Walze proved this theorem in the normal coordinates system simultaneously. Based on the theory of the noncommutative residue proposed by Wodzicki, Fedosov, et al. \cite{FGLS} constructed the noncommutative residue of the classical elemental algebra of the Boutte de Monvel calculus on compact manifolds of dimension $n>2$.

Using elliptic pseudo-differential operators and the noncommutative residue is a natural way to study the spectral Einstein functional and the operator-theoretic interpretation of the gravitational action on bounded manifolds. Concerning the Dirac operator and the signature operator, Wang carried out the computation of noncommutative residues and succeeded in proving the Kastler-Kalau-Walze type theorem for manifolds with boundary \cite{Wa1, Wa3, Wa4}. In \cite{FGV2}, Figueroa, et al. introduced a noncommutative integral based on the noncommutative residue.  Pf${\mathrm{\ddot{a}}}$ffle and Stephan \cite{pf1} considered orthogonal connections with arbitrary torsion on
compact Riemannian manifolds and computed the spectral action. In \cite{DL}, Dabrowski, Sitarz and Zalecki defined the spectral Einstein functional for a general spectral triple and for the noncommutative torus, they computed the spectral Einstein functional.
 In \cite{WWw}, Wang, et al. gave some new spectral functionals which are the extension of spectral functionals to the noncommutative realm with torsion, and related them to the noncommutative residue for manifolds with boundary about Dirac operators with torsion.
 In \cite{DL2}, Dabrowski, Sitarz and Zalecki examined the metric and Einstein bilinear functionals of differential forms for the Hodge-Dirac operator $d+d^*$ on an oriented, closed, even-dimensional Riemannian manifold. Hong and Wang computed the spectral Einstein functional associated with the Dirac operator with torsion
 on even-dimensional spin manifolds without boundary in \cite{Hj}. Wu and Wang computed the spectral Einstein functional for the Witten deformation $d+d^*+\widehat{c}(V)$ on even-dimensional Riemannian manifolds without boundary \cite{Wu2}. In \cite{Wu1}, Wu and Wang obtained the Lichnerowicz type formula
for the Witten deformation of the nonminimal de Rham-Hodge operator, and the gravitational action in
the case of $n$-dimensional compact manifolds without boundary. Finally, the authors gave the proof of Kastler-Kalau-Walze type theorems for the Witten deformation of the nonminimal de Rham-Hodge operator on $4,6$-dimensional oriented compact manifolds with boundary. In \cite{WWS}, the authors proved Kastler-Kalau-Walze type
theorems associated with nonminimal de Rham-Hodge operators on compact manifolds with boundary.

{\bf The motivation of this paper} is to compute the spectral Einstein functional for the nonminimal de Rham-Hodge operator $\widetilde{D}=a_0d+b_0\delta$ on even-dimensional Riemannian manifolds without boundary, where $d,\delta,a_0,b_0$ are defined in Section \ref{section:2}. Our main theorem is as follows.
 \begin{thm}\label{1thm}
 	Let $M$ be an $n=2m$ dimensional ($n\geq 3$) Riemannian manifold, for the nonminimal de Rham-Hodge operator $\widetilde{D}$, the spectral metric functional $\mathscr{M}_{\widetilde{D}}$ and the spectral Einstein functional $\mathscr{N}_{\widetilde{D}}$ are equal to
 	
 \begin{align}
 	\mathscr{M}_{\widetilde{D}}&=\mathrm{Wres}\bigg(\widetilde{c}(u)\widetilde{c}(v){\widetilde{D}^{-2m}}\bigg)=-2^{2m} \frac{2 \pi^{m}}{\Gamma\left(m\right)}\int_{M} (a_0b_0)^{-m+1}g(u,v)d{\rm Vol}_M;\nonumber\\
 	\mathscr{N}_{\widetilde{D}}&=\mathrm{Wres}\bigg(\widetilde{c}(u)\big(\widetilde{D}\widetilde{c}(v)+\widetilde{c}(v)\widetilde{D}\big)\widetilde{D}^{-2m+1}\bigg)
=-2^{2m} \frac{2 \pi^{m}}{\Gamma\left(m\right)}\int_{M}\frac{(a_0b_0)^{-m+2}}{6}\mathbb{G}(u,v)d{\rm Vol}_M,\nonumber
 \end{align}
 where  $g(u,v)=\sum_{a,b=1}^{n}u_{a} v_{b} $, $ \mathbb{G}(u,v)=\operatorname{Ric}(u,v)-\frac{1}{2} s g(u,v) $, $\widetilde{c}(u)=\sum_{\eta=1}^{n} u_{\eta}\widetilde{c}(e_{\eta}), \widetilde{c}(v)=\sum_{\gamma=1}^{n} v_{\gamma}\widetilde{c}(e_{\gamma})$ and $\widetilde{c}(e_j)=a_0\epsilon(e_j^*)-b_0\iota(e_j)$.
 \end{thm}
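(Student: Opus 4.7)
The plan is to realize both functionals as Wodzicki residues and to exploit the trace formula
\[ \mathrm{Wres}(P) = \int_{M}\int_{|\xi|=1}\mathrm{tr}\bigl[\sigma_{-n}(P)(x,\xi)\bigr]\,d\xi\,d\mathrm{Vol}_M(x), \]
valid for a classical pseudodifferential operator $P$ acting on the form bundle. The key structural input is the identity $\widetilde{D}^{2}=a_{0}b_{0}(d\delta+\delta d)$, which follows from $d^{2}=\delta^{2}=0$; combined with the classical Weitzenb\"ock formula for the Hodge Laplacian this yields
\[ \widetilde{D}^{2} = a_{0}b_{0}\bigl(\nabla^{*}\nabla + \mathcal{E}\bigr), \]
where $\mathcal{E}$ is the Weitzenb\"ock curvature endomorphism (this is the base case of the Lichnerowicz-type identity established in \cite{Wu1}). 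Its principal symbol is $a_{0}b_{0}|\xi|^{2}\,\mathrm{Id}$, while $\sigma_{1}(\widetilde{D})(x,\xi)=i\widetilde{c}(\xi)$. Expanding $\widetilde{c}(e_{j})=a_{0}\epsilon(e_{j}^{*})-b_{0}\iota(e_{j})$ and using $\epsilon^{2}=\iota^{2}=0$ together with $\epsilon(e_{j}^{*})\iota(e_{k})+\iota(e_{k})\epsilon(e_{j}^{*})=\delta_{jk}$ gives the modified Clifford relation
\[ \widetilde{c}(e_{j})\widetilde{c}(e_{k})+\widetilde{c}(e_{k})\widetilde{c}(e_{j})=-2a_{0}b_{0}\,\delta_{jk}\,\mathrm{Id}, \]
which, combined with $\dim\Lambda^{*}T_{x}^{*}M=2^{n}$, gives $\mathrm{tr}\bigl[\widetilde{c}(u)\widetilde{c}(v)\bigr]=-2^{n}a_{0}b_{0}\,g(u,v)$.

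For the metric functional, $\widetilde{c}(u)\widetilde{c}(v)\widetilde{D}^{-2m}$ has exact order $-n$, so only the leading symbol $\sigma_{-2m}(\widetilde{D}^{-2m})=(a_{0}b_{0})^{-m}|\xi|^{-2m}\mathrm{Id}$ is needed. Substituting in the trace formula, applying the trace identity above, and integrating the angular part using $\mathrm{vol}(S^{n-1})=2\pi^{m}/\Gamma(m)$ immediately produces the claimed value of $\mathscr{M}_{\widetilde{D}}$.

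The Einstein functional is substantially more delicate because $\widetilde{c}(u)\{\widetilde{D},\widetilde{c}(v)\}\widetilde{D}^{-2m+1}$ has leading order $2-n$, so its $\sigma_{-n}$ component sits two orders below the principal symbol and only emerges after absorbing all second-order corrections coming from the metric and the connection. The plan is to follow the Kastler-Kalau-Walze strategy adapted to the modified Clifford action as in \cite{DL2,Hj,Wu2}: work in geodesic normal coordinates centered at an arbitrary point $x_{0}\in M$, so that $g_{ij}(x_{0})=\delta_{ij}$, $\Gamma^{k}_{ij}(x_{0})=0$, and $\partial_{k}\partial_{l}g_{ij}(x_{0})$ is controlled by the Riemann tensor. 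I would then (i) compute the full symbol of $\{\widetilde{D},\widetilde{c}(v)\}$ to the required subleading order via the Leibniz rule for $\nabla$, starting from the principal term $i\{\widetilde{c}(\xi),\widetilde{c}(v)\}=-2ia_{0}b_{0}g(\xi,v)$; (ii) derive the parametrix expansion
\[ \sigma(\widetilde{D}^{-2m+1})\sim \sigma_{-2m+1}+\sigma_{-2m}+\sigma_{-2m-1}+\cdots \]
from the Weitzenb\"ock identity for $\widetilde{D}^{2}$ via the standard recursion; (iii) apply the composition rule
\[ \sigma(AB)(x,\xi)\sim \sum_{\alpha}\frac{(-i)^{|\alpha|}}{\alpha!}\,\partial_{\xi}^{\alpha}\sigma(A)\cdot \partial_{x}^{\alpha}\sigma(B) \]
to extract the $\sigma_{-n}$ component of the triple product; and (iv) take the fiberwise trace and integrate against the angular measure, using Gaussian moment identities such as $\int_{|\xi|=1}\xi_{i}\xi_{j}\,d\xi=\tfrac{1}{n}\cdot\tfrac{2\pi^{m}}{\Gamma(m)}\delta_{ij}$ to contract tensor indices and assemble the Einstein tensor. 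The Ricci and scalar curvature contributions will arise from two distinct sources that must combine correctly: the Weitzenb\"ock endomorphism $\mathcal{E}$, which encodes the Ricci piece on forms, and the second $x$-derivatives of the symbol of $\widetilde{D}^{-2}$ arising from the normal-coordinate expansion of the metric, which contribute the $sg(u,v)$ piece. The main obstacle will be the combinatorial bookkeeping of these contributions: every $\widetilde{c}$-trace such as $\mathrm{tr}[\widetilde{c}(u)\widetilde{c}(\xi)\widetilde{c}(v)\widetilde{c}(\xi)]$ must be evaluated via the modified Clifford relation, each term must be tracked with its correct power of $a_{0}b_{0}$, and the various curvature pieces must sum to produce exactly $\tfrac{1}{6}\mathbb{G}(u,v)$ with prefactor $(a_{0}b_{0})^{-m+2}$.
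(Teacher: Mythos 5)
Your treatment of the metric functional is complete and matches the paper: the operator $\widetilde{c}(u)\widetilde{c}(v)\widetilde{D}^{-2m}$ has exact order $-n$, the modified Clifford relation $\widetilde{c}(e_j)\widetilde{c}(e_k)+\widetilde{c}(e_k)\widetilde{c}(e_j)=-2a_0b_0\delta_{jk}$ gives $\operatorname{tr}[\widetilde{c}(u)\widetilde{c}(v)]=-2^{2m}a_0b_0\,g(u,v)$, and the leading symbol $(a_0b_0)^{-m}\|\xi\|^{-2m}$ plus the volume of $S^{n-1}$ yields the stated value of $\mathscr{M}_{\widetilde{D}}$. For the Einstein functional, however, what you give is a roadmap, not a proof. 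The entire content of the theorem is the assertion that the many curvature contributions assemble to exactly $\tfrac{1}{6}\big(\operatorname{Ric}(u,v)-\tfrac{1}{2}sg(u,v)\big)$ with prefactor $(a_0b_0)^{-m+2}$, and none of the steps that produce these numbers is carried out: you do not write down the subleading symbols of $\widetilde{D}_0^{-2m}$ in normal coordinates, evaluate any of the traces such as $\operatorname{tr}[\widetilde{c}(u)\widetilde{c}(e_f)\widetilde{c}(v)\widetilde{c}(e_g)c(e_s)c(e_t)]$, or perform the angular integrals that contract the Riemann tensor. The paper's proof consists precisely of this bookkeeping: six composition-formula terms for $\mathrm{Wres}(\widetilde{c}(u)\widetilde{D}\,\widetilde{c}(v)\widetilde{D}\,\widetilde{D}_0^{-2m})$ plus five terms for $\mathrm{Wres}(\widetilde{c}(u)\widetilde{c}(v)\widetilde{D}_0^{-2m+2})$, several of which vanish by antisymmetry of $R$ and several of which contribute Ricci and scalar pieces with different signs that must cancel correctly. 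Acknowledging that this is "the main obstacle" does not discharge it.

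Two concrete points in your plan would also need repair. First, $\widetilde{D}^{-2m+1}$ is an odd power, so its parametrix cannot be obtained directly from "the standard recursion" applied to the Weitzenb\"ock identity for $\widetilde{D}^{2}$; one must factor it, e.g.\ as $\widetilde{D}\cdot(a_0b_0)^{-m}\widetilde{D}_0^{-2m}$. The paper does this by absorbing the stray first-order factors into the second-order operators $\mathcal{P}=\widetilde{c}(u)\widetilde{D}$ and $\mathcal{Q}=\widetilde{c}(v)\widetilde{D}$, so that only symbols of the integer power $\widetilde{D}_0^{-2m}$ of the Laplacian are ever needed; your plan leaves this composition implicit. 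Second, your attribution of the curvature sources is reversed relative to what the computation actually produces: the four-Clifford Weitzenb\"ock term $\tfrac{1}{8}\sum R_{ijkl}\widehat{c}(e_i)\widehat{c}(e_j)c(e_k)c(e_l)$ has vanishing contribution after taking traces, and the surviving part of the endomorphism is the scalar piece $\tfrac{1}{4}s$, whereas the $\operatorname{Ric}(u,v)$ terms arise from the normal-coordinate expansion of the metric (the $R_{ajbk}x^jx^k$ corrections to the leading symbol and the $\operatorname{Ric}_{ak}x^k$ terms in the subleading symbol). Getting this right matters, because the relative coefficient $-\tfrac{1}{2}$ inside $\mathbb{G}(u,v)$ is exactly what is at stake.
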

 
\begin{rem}
The Einstein functional is still recovered in the non-self-adjoint case.
\end{rem}

The paper is organized in the following way. In Section \ref{section:2}, we introduce some notations about Clifford action and the symbols of the generalized laplacian for the nonminimal de Rham-Hodge operator. In Section \ref{section:3}, using the residue for a differential operator of Laplace type ${\rm Wres}(P):=\int_{S^*M}{\rm tr}(\sigma_{-n}^P)(x,\xi)$ and the composition formula of pseudo-differential operators, we obtain the metric functional $\mathscr{M}_{\widetilde{D}}$ and the spectral Einstein functional $\mathscr{N}_{\widetilde{D}}$ for the nonminimal de Rham-Hodge operator on even-dimensional Riemannian manifolds without boundary. In Section \ref{section:4}, we introduce several examples of non-self-adjoint spectral triple, all of which are important examples of the noncommutative geometry.

\section{Clifford action and the nonminimal de Rham-Hodge operator}
\label{section:2}
Firstly, we introduce some notations about Clifford action. Let $M$ be an $n$-dimensional ($n\geq 3$) oriented compact Riemannian manifold with a Riemannian metric $g^{TM}$.\\
\indent Let $\nabla^L$ be the Levi-Civita connection about $g^{TM}$. In the
fixed orthonormal frame $\{e_1,\cdots,e_n\}$, the connection matrix $(\omega_{s,t})$ is defined by
\begin{equation}
\label{eq1}
\nabla^L(e_1,\cdots,e_n)= (e_1,\cdots,e_n)(\omega_{s,t}).\nonumber
\end{equation}
\indent Let $\epsilon (e_j^*)$,~$\iota (e_j^*)$ be the exterior and interior multiplications respectively, where $e_j^*=g^{TM}(e_j,\cdot)$. And $c(e_j)$ be the Clifford action. Write
\begin{equation}
\label{eq2}
\widehat{c}(e_j)=\epsilon (e_j^* )+\iota
(e_j^*);~~
c(e_j)=\epsilon (e_j^* )-\iota (e_j^* ),\nonumber
\end{equation}
which satisfies
\begin{align}
\label{ali1}
&\widehat{c}(e_i)\widehat{c}(e_j)+\widehat{c}(e_j)\widehat{c}(e_i)=2g^{TM}(e_i,e_j);~~\nonumber\\
&c(e_i)c(e_j)+c(e_j)c(e_i)=-2g^{TM}(e_i,e_j);~~\nonumber\\
&c(e_i)\widehat{c}(e_j)+\widehat{c}(e_j)c(e_i)=0.
\end{align}

Next, we recall the nonminimal de Rham-Hodge operator. Following \cite{Wu1}, the de Rham derivative $d$ is a differential operator on $C^{\infty}(M,\wedge^*T^*M)$ and the de Rham coderivative $\delta=d^*$. Let $\widetilde{c}(e_j)=a_0\epsilon(e_j^*)-b_0\iota(e_j)$, where $a_0, b_0$ are constants, then the nonminimal de Rham-Hodge operator is defined
\begin{align}
\widetilde{D}=&a_0d+b_0\delta\nonumber\\
=&\sum^n_{i=1}\widetilde{c}(e_i)\bigg[e_i+\frac{1}{4}\sum_{s,t}\omega_{s,t}
(e_i)[\widehat{c}(e_s)\widehat{c}(e_t)
-c(e_s)c(e_t)]\bigg].
\end{align}

For a differential operator of Laplace type $P$, it has locally the form
\begin{equation}\label{p}
	P=-(g^{ij}\partial_i\partial_j+A^i\partial_i+B),
\end{equation}
where $\partial_{i}$  is a natural local frame on $TM,$ $(g^{ij})_{1\leq i,j\leq n}$ is the inverse matrix associated to the metric
matrix  $(g_{ij})_{1\leq i,j\leq n}$ on $M,$ $A^{i}$ and $B$ are smooth sections of $\textrm{End}(N)$ on $M$ (endomorphism).
If $P$ satisfies the form \eqref{p}, then there is a unique
connection $\nabla$ on $N$ and a unique endomorphism $\widetilde{E}$ such that
\begin{equation}
	P=-[g^{ij}(\nabla_{\partial_{i}}\nabla_{\partial_{j}}- \nabla_{\nabla^{L}_{\partial_{i}}\partial_{j}})+\widetilde{E}].
\end{equation}

Moreover
(with local frames of $T^{*}M$ and $N$), $\nabla_{\partial_{i}}=\partial_{i}+\omega_{i} $
and $\widetilde{E}$ are related to $g^{ij}$, $A^{i}$ and $B$ through
\begin{eqnarray}
	&&\omega_{i}=\frac{1}{2}g_{ij}\big(A^{i}+g^{kl}\Gamma_{ kl}^{j} \texttt{id}\big);\nonumber\\
	&&\widetilde{E}=B-g^{ij}\big(\partial_{i}(\omega_{j})+\omega_{i}\omega_{j}-\omega_{k}\Gamma_{ ij}^{k} \big),\nonumber
\end{eqnarray}
where $\Gamma_{ kl}^{j}$ is the  Christoffel coefficient of $\nabla^{L}$.

By $d^2=\delta^2=0$, the following identity holds:
\begin{align}
\widetilde{D}^{2}=&a_0^2d^2+b_0^2\delta^2+a_0b_0(d\delta+\delta d)\nonumber\\
=&a_0b_0(d\delta+\delta d)\nonumber\\
=&a_0b_0(d+\delta)^2\nonumber\\
:=&a_0b_0\widetilde{D}_0^{2}.
\end{align}

From \cite{DL} ,we get
\begin{align} \label{dt2}
\widetilde{D}_0
^{2}=-g^{ab}(\nabla_{{\partial}_a}\nabla_{{\partial}_b}-\nabla_{{\nabla_{{\partial}_a}^{L}}\partial_b})+E,\nonumber
\end{align}
where $E=-\widetilde{E}, \nabla_{{\partial}_a}={\partial}_a-\widetilde{T}_a$.

\indent By \cite{Y}, we have
\begin{equation}\label{eq8}
(d+\delta)^2
=-\triangle+\frac{1}{8}\sum_{ijkl}R_{ijkl}\widehat{c}(e_i)\widehat{c}(e_j)c(e_k)c(e_l)+\frac{1}{4}s;\nonumber
\end{equation}
\begin{equation}
\label{eq9}
-\triangle=-g^{ij}(\nabla^{\Lambda^*(T^*M)}_{i}\nabla^{\Lambda^*(T^*M)}_{j}-\Gamma_{ij}^{k}\nabla^{\Lambda^*(T^*M)}_{k}),\nonumber
\end{equation}
where $R$ (resp. $s$) denote the Riemannian curvature (resp. scalar curvature).

Define $\omega_{s,t}
(e_i)=-\langle \nabla_{e_i}^{L}e_{s}, e_{t}\rangle $, we get
\begin{align} \label{ta1}
	\widetilde{T}_a&=-\frac{1}{4}\sum_{s, t=1}^{n}\langle \nabla_{{\partial}_a}^{L}e_{s}, e_{t}\rangle c(e_{s})c(e_{t})+\frac{1}{4}\sum_{s, t=1}^{n}\langle \nabla_{{\partial}_a}^{L}e_{s}, e_{t}\rangle \widehat{c}(e_{s})\widehat{c}(e_{t});\nonumber\\
E&=\frac{1}{8}\sum_{i,j,k,l=1}^nR_{ijkl}\widehat{c}(e_i)\widehat{c}(e_j)c(e_k)c(e_l)+\frac{1}{4}s.
\end{align}

In normal coordinates, $\widetilde{T}_a$ is expanded near $x=0$ by Taylor expansion. That is
\begin{align}
	\widetilde{T}_a={T}_a+{T}_{ab}x^b+O(x^2).\nonumber
\end{align}

By ${\partial}_{l}\langle \nabla_{{\partial}_a}^{L}e_{s}, e_{t}\rangle(x_0)=\frac{1}{2}{\rm R}_{lats}(x_0)$, we get
\begin{align} \label{at0}
	{T}_a=0,
\end{align}
and
\begin{align} \label{ta0}
	{T}_{ab}=-\frac{1}{8}\sum_{a,b,s,t=1}^{2m}R_{bats}(x_0) c(e_{s})c(e_{t})+\frac{1}{8}\sum_{a,b,s,t=1}^{2m}R_{bats}(x_0) \widehat{c}(e_{s})\widehat{c}(e_{t}).
\end{align}

 \begin{lem}\label{lem1}\cite{DL}
	The leading symbols of the generalized laplacian $\Delta_{T,E}^{-m}$ are as follows:
	\begin{align}
		\sigma_{-2 m}(\Delta_{T,E}^{-m})=&\|\xi\|^{-2 m-2}\sum_{a,b,j,k=1}^{2m}\left(\delta_{a b}-\frac{m}{3} R_{a j b k} x^{j} x^{k}\right) \xi_{a} \xi_{b}+O\left(\mathbf{x}^{3}\right) ;\nonumber\\
		\sigma_{-2m-1}(\Delta_{T,E}^{-m})=& \frac{-2 m i}{3}\|\xi\|^{-2 m-2} \sum_{a,k=1}^{2m}\operatorname{Ric}_{a k} x^{k} \xi_{a}-2 m i\|\xi\|^{-2 m-2}\sum_{a,b=1}^{2m}\left(T_{a} \xi_{a}+T_{a b} x^{b} \xi_{a}\right)+O(\mathbf{x^2}) ;\nonumber\\
		\sigma_{-2m-2}(\Delta_{T,E}^{-m})=& \frac{m(m+1)}{3}\|\xi\|^{-2 m-4} \sum_{a,b=1}^{2m}\operatorname{Ric}_{a b} \xi_{a} \xi_{b}\nonumber \\
		&-2 m(m+1)\|\xi\|^{-2 m-4}\sum_{a,b=1}^{2m} T_{a} T_{b} \xi_{a} \xi_{b}+m\sum_{a,b=1}^{2m}\left(T_{a} T_{a}-T_{a a}\right)\|\xi\|^{-2 m-2}\nonumber \\
		&+2 m(m+1)\|\xi\|^{-2 m-4} \sum_{a,b=1}^{2m}T_{a b} \xi_{a} \xi_{b}-mE \|\xi\|^{-2m-2}+O(\mathbf{x}) ,\nonumber
	\end{align}
where $Ric$ denotes Ricci curvature.
\end{lem}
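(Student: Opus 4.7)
The plan is to derive the three symbol expansions by standard pseudodifferential symbol calculus applied to the Laplace-type operator $P = \Delta_{T,E}$, worked out in Riemann normal coordinates centered at an arbitrary base point $x_0$. Writing $P = -(g^{ij}\partial_i\partial_j + A^i\partial_i + B)$ as in (\ref{p}), its total symbol decomposes as $\sigma(P) = p_2 + p_1 + p_0$ with $p_2 = g^{ij}\xi_i\xi_j$, $p_1 = -iA^j\xi_j$, $p_0 = -B$. The first step is to express $A^j$ and $B$ in terms of the connection datum $\omega_i = -\widetilde{T}_i$ and the endomorphism datum $\widetilde{E} = -E$ by inverting the two relations displayed immediately after (\ref{p}).

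Next I would Taylor-expand each homogeneous piece to the order demanded by the lemma, using in normal coordinates the standard expansions $g^{ij}(x) = \delta^{ij} + \tfrac{1}{3}R_{ikjl}(x_0)x^kx^l + O(\mathbf{x}^3)$, $\Gamma^k_{ij}(x_0) = 0$ with first derivative of $\Gamma$ expressible through curvature, the identity $\partial_l\langle\nabla^L_{\partial_a}e_s,e_t\rangle(x_0) = \tfrac12 R_{lats}(x_0)$, and the torsion expansion $\widetilde{T}_a(x) = T_{ab}x^b + O(\mathbf{x}^2)$ coming from (\ref{at0})--(\ref{ta0}). This pins down $p_2$ to quadratic order, $p_1$ to linear order, and $p_0$ to constant order, exposing the building blocks $T_a$, $T_{ab}$, $E$, $\mathrm{Ric}$ that must appear in the output.

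The symbol of $P^{-m}$ is then computed by the classical recursion for the symbol of the inverse of an elliptic operator; equivalently one may use the Seeley resolvent representation $P^{-m} = \tfrac{1}{(m-1)!}\int_0^\infty s^{m-1} e^{-sP}\,ds$ together with the heat parametrix, or iterate via $\sigma(P^{-m}) = \sigma(P^{-1}\cdot P^{-(m-1)})$. The leading term $\sigma_{-2m}(P^{-m}) = p_2^{-m}$ is immediate: expanding $\bigl(\|\xi\|^2 + \tfrac{1}{3}R_{ikjl}x^kx^l\xi_i\xi_j\bigr)^{-m}$ by $(1+u)^{-m} \approx 1-mu$ gives the first formula. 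The subleading term $\sigma_{-2m-1}$ collects the two contributions $-p_1\,p_2^{-m-1}$ and $-i\,\partial_{\xi_j}p_2 \,\partial_{x^j}(p_2^{-m})$, weighted by the differential-order shift; these yield precisely the Ricci piece $\mathrm{Ric}_{ak}x^k\xi_a$ and the torsion pieces $T_a\xi_a$ and $T_{ab}x^b\xi_a$ stated in the lemma.

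The main obstacle is the bookkeeping for $\sigma_{-2m-2}$, which receives contributions of four distinct types: (i) $p_0\cdot p_2^{-m-1}$ producing the $-mE$ term; (ii) $p_1^2\cdot p_2^{-m-2}$ producing the $-2m(m+1)T_aT_b\xi_a\xi_b$ piece with its $m(m+1)$ combinatorial factor; (iii) second $\xi$-derivatives of $p_2$ paired with second $x$-derivatives of $p_2^{-m}$, which produce both the Ricci term $\tfrac{m(m+1)}{3}\mathrm{Ric}_{ab}\xi_a\xi_b$—after Riemann/Bianchi symmetries reduce the four-index curvature contraction to a Ricci one—and the $2m(m+1)T_{ab}\xi_a\xi_b$ piece; and (iv) a trace-like $m(T_aT_a - T_{aa})$ contribution from mixed first $\xi$- and $x$-derivatives. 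Correctly tracking the coefficients $m$, $m(m+1)$, and $m(m+1)/3$, and applying curvature symmetries to collapse double-Riemann contractions to Ricci contractions, is the delicate part; once done, the three formulas of the lemma assemble directly.
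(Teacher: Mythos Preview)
The paper does not supply its own proof of this lemma: it is quoted verbatim from \cite{DL}, as the citation in the lemma header indicates. Your outline is the standard normal-coordinate symbol computation that underlies such results, and it is essentially what one finds in \cite{DL}; so in that sense your approach and the (cited) source agree.

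Two small bookkeeping corrections to your sketch are worth flagging, since you identified the term-by-term attribution as the delicate part. First, the contribution $m(T_aT_a - T_{aa})\|\xi\|^{-2m-2}$ does not arise from ``mixed first $\xi$- and $x$-derivatives'' as in your item (iv); it comes directly from the zeroth-order symbol $p_0 = -B$, because in normal coordinates at $x_0$ one has $B(x_0) = -E + \sum_a(T_aT_a - T_{aa})$ after substituting $\omega_i = -\widetilde T_i$ into the formula $\widetilde E = B - g^{ij}(\partial_i\omega_j + \omega_i\omega_j - \omega_k\Gamma^k_{ij})$. Thus the single contribution $-m\,p_0\,p_2^{-m-1}$ already accounts for both the $-mE$ piece and the $m(T_aT_a - T_{aa})$ piece simultaneously. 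Second, the term $2m(m+1)\|\xi\|^{-2m-4}T_{ab}\xi_a\xi_b$ is not produced by second $\xi$-derivatives of $p_2$ as you wrote in (iii); it comes from the $x$-derivative of $p_1$ (equivalently, from the linear-in-$x$ part of $A^j$), since $\partial_{x_b}A^a(x_0)$ contains $-2T_{ab}$. These are only misattributions of where individual pieces originate in the recursion, not errors in the method itself; once you rerun the recursion with these sources corrected, the stated formulas assemble as claimed.
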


By \eqref{ta1}-\eqref{ta0} and Lemma \ref{lem1}, we get the following lemma.

\begin{lem}\label{lem2}
	General dimensional symbols of the operator $\widetilde{D}_0$ are given:
	\begin{align}
		\sigma_{-2 m}(\widetilde{D}_0^{-2m})=&\|\xi\|^{-2 m-2}\sum_{a,b,j,k=1}^{2m} \left(\delta_{a b}-\frac{m}{3} R_{a j b k} x^{j} x^{k}\right) \xi_{a} \xi_{b}+O\left(\mathbf{x}^{3}\right); \nonumber\\
		\sigma_{-2m-1}(\widetilde{D}_0^{-2m})=& -\frac{2 mi}{3}\|\xi\|^{-2 m-2}\sum_{a,b=1}^{2m}  \operatorname{Ric}_{a b} x^{b} \xi_{a}\nonumber\\
		&+\frac{ m i}{4}\|\xi\|^{-2 m-2} \sum_{a,b,s,t=1}^{2m} \operatorname{R}_{b a t s}(x_0) c(e_s) c(e_t) x^{b} \xi_{a}\nonumber\\
		&-\frac{ m i}{4}\|\xi\|^{-2 m-2} \sum_{a,b,s,t=1}^{2m} \operatorname{R}_{b a t s}(x_0) \widehat{c}(e_s) \widehat{c}(e_t) x^{b} \xi_{a} +O\left(\mathbf{x}^{2}\right);\nonumber\\
		\sigma_{-2m-2}(\widetilde{D}_0^{-2m})=& \frac{m(m+1)}{3}\|\xi\|^{-2 m-4}\sum_{a,b=1}^{2m} \operatorname{Ric}_{a b} \xi_{a} \xi_{b} \nonumber\\
		&-\frac{ m (m+1)}{4}\|\xi\|^{-2 m-4} \sum_{a,b,s,t=1}^{2m}\operatorname{R}_{b a t s}(x_0) c(e_s) c(e_t) \xi_{a} \xi_{b}\nonumber\\
&+\frac{ m (m+1)}{4}\|\xi\|^{-2 m-4} \sum_{a,b,s,t=1}^{2m}\operatorname{R}_{b a t s}(x_0) \widehat{c}(e_s) \widehat{c}(e_t) \xi_{a} \xi_{b}\nonumber\\
		&-\frac{1}{8}m\|\xi\|^{-2 m-2}\sum_{ijkl}R_{ijkl}\widehat{c}(e_i) \widehat{c}(e_j)c(e_k) c(e_l)\nonumber\\
&-\frac{1}{4}m \|\xi\|^{-2 m-2}s+O\left(\mathbf{x}\right),\nonumber
	\end{align}
where $i$ in $mi$ represents the imaginary unit, and $i$ in other subscript positions represents the $i$-th. 
\end{lem}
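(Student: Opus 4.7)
The plan is to derive Lemma \ref{lem2} by specializing Lemma \ref{lem1} to the operator $\widetilde{D}_0^2$. The key observation is that the Lichnerowicz-type formula displayed above Lemma \ref{lem1} exhibits $\widetilde{D}_0^2=(d+\delta)^2$ as a generalized Laplacian in the standard form $-g^{ab}(\nabla_{\partial_a}\nabla_{\partial_b}-\nabla_{\nabla^L_{\partial_a}\partial_b})+E$, with connection one-form $\widetilde{T}_a$ and endomorphism $E$ read off explicitly in \eqref{ta1}. Since $\widetilde{D}_0^{-2m}=(\widetilde{D}_0^2)^{-m}$, the three symbols listed in Lemma \ref{lem2} are obtained by substituting the $T_a$, $T_{ab}$, and $E$ arising from this particular $\widetilde{T}_a$ into the general formulas of Lemma \ref{lem1}.

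First I would recall \eqref{at0}, which in normal coordinates gives $T_a=0$ because $\langle\nabla^L_{\partial_a}e_s,e_t\rangle(x_0)=0$, together with \eqref{ta0}, which writes $T_{ab}$ as a combination of $c(e_s)c(e_t)$ and $\widehat{c}(e_s)\widehat{c}(e_t)$ weighted by $R_{bats}(x_0)$ with prefactors $\mp\tfrac{1}{8}$. The symbol $\sigma_{-2m}$ then transcribes directly, since it depends only on the leading geometric data. For $\sigma_{-2m-1}$, the $T_a\xi_a$ term drops out while $-2mi\,T_{ab}x^b\xi_a$ produces the two Clifford--curvature terms of Lemma \ref{lem2} once the coefficient $-2mi\cdot(\mp\tfrac{1}{8})=\pm\tfrac{mi}{4}$ is collected.

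For $\sigma_{-2m-2}$ three substitutions are needed. The $T_aT_b\xi_a\xi_b$ contributions vanish; the term $2m(m+1)T_{ab}\xi_a\xi_b$ with \eqref{ta0} inserted produces the two middle Clifford--curvature terms with prefactors $\mp\tfrac{m(m+1)}{4}$; and $-mE\|\xi\|^{-2m-2}$ with $E$ from \eqref{ta1} produces $-\tfrac{m}{8}\sum R_{ijkl}\widehat{c}(e_i)\widehat{c}(e_j)c(e_k)c(e_l)$ together with the scalar piece $-\tfrac{m}{4}s\|\xi\|^{-2m-2}$. A small extra step is needed for the term $m(T_aT_a-T_{aa})\|\xi\|^{-2m-2}$ in Lemma \ref{lem1}: the first summand vanishes because $T_a=0$, and the trace $T_{aa}$ vanishes because the antisymmetry $R_{ijkl}=-R_{jikl}$ forces $\sum_a R_{aats}(x_0)=0$, so no extraneous term survives.

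The main obstacle is not conceptual but careful bookkeeping of signs and numerical prefactors: each of the four Clifford--curvature terms in Lemma \ref{lem2} must be verified by combining the constants appearing in Lemma \ref{lem1} with those of \eqref{ta0} and \eqref{ta1}, and the Riemann-tensor antisymmetry must be explicitly invoked to eliminate the $T_{aa}$ trace. Once these checks are carried out, the rest of the derivation is a direct transcription of Lemma \ref{lem1}.
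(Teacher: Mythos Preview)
Your proposal is correct and follows exactly the approach the paper takes: the paper simply states ``By \eqref{ta1}--\eqref{ta0} and Lemma \ref{lem1}, we get the following lemma,'' and your write-up is a detailed unpacking of that substitution, including the bookkeeping of prefactors and the observation that $T_{aa}=0$ by the antisymmetry $R_{aats}=0$.
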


 \section{The spectral Einstein functional for the nonminimal de Rham-Hodge operator}
 \label{section:3}
In this section, we want to obtain the spectral metric functional and the spectral Einstein functional for the nonminimal de Rham-Hodge operator, which generates a non-self-adjoint spectral triple and its spectral metric functional and spectral Einstein functional are defined in Theorem \ref{thm}.

For a pseudo-differential operator  $P$, acting on sections of a vector bundle over an $n$-dimensional compact Riemannian manifold $M$ ($n$ is even), the analogue of the volume element in noncommutative geometry is the operator  $D^{-n}=: d s^{n} $. And pertinent operators are realized as pseudo-differential operators on the spaces of sections. Extending previous definitions by Connes \cite{co5}, a noncommutative integral was introduced in \cite{FGV2} based on the noncommutative residue \cite{wo2}, combine (1.4) in \cite{co4} and \cite{Ka}, using the definition of the residue:
\begin{align}\label{wers}
	\int P d s^{n}:=\operatorname{Wres} P D^{-n}:=\int_{S^{*} M} \operatorname{tr}\left[\sigma_{-n}\left(P D^{-n}\right)\right](x, \xi),
\end{align}
where  $\sigma_{-n}\left(P D^{-n}\right) $ denotes the  $(-n)$th order piece of the complete symbols of  $P D^{-n} $,  $\operatorname{tr}$  as shorthand of trace.

Firstly, we review here technical tool of the computation, which are the integrals of polynomial functions over the unit spheres. By (32) in \cite{B1}, we define
\begin{align}
I_{S_n}^{\gamma_1\cdot\cdot\cdot\gamma_{2\bar{n}+2}}=\int_{|x|=1}d^nxx^{\gamma_1}\cdot\cdot\cdot x^{\gamma_{2\bar{n}+2}},
\end{align}
i.e. the monomial integrals over a unit sphere.
Then by Proposition A.2. in \cite{B1},  polynomial integrals over higher spheres in the $n$-dimesional case are given
\begin{align}
I_{S_n}^{\gamma_1\cdot\cdot\cdot\gamma_{2\bar{n}+2}}=\frac{1}{2\bar{n}+n}[\delta^{\gamma_1\gamma_2}I_{S_n}^{\gamma_3\cdot\cdot\cdot\gamma_{2\bar{n}+2}}+\cdot\cdot\cdot+\delta^{\gamma_1\gamma_{2\bar{n}+1}}I_{S_n}^{\gamma_2\cdot\cdot\cdot\gamma_{2\bar{n}+1}}],
\end{align}
where $S_n\equiv S^{n-1}$ in $\mathbb{R}^n$.

For $\bar{n}=0$, we have $I^0={\rm Vol}(S^{n-1})$=$\frac{2\pi^{\frac{n}{2}}}{\Gamma(\frac{n}{2})}$, we immediately get
\begin{align}
I_{S_n}^{\gamma_1\gamma_2}&=\frac{1}{n}{\rm Vol}(S^{n-1})\delta^{\gamma_1\gamma_2};\nonumber\\
I_{S_n}^{\gamma_1\gamma_2\gamma_3\gamma_4}&=\frac{1}{n(n+2)}{\rm Vol}(S^{n-1})[\delta^{\gamma_1\gamma_2}\delta^{\gamma_3\gamma_4}+\delta^{\gamma_1\gamma_3}\delta^{\gamma_2\gamma_4}+\delta^{\gamma_1\gamma_4}\delta^{\gamma_2\gamma_3}].
\end{align}

 \begin{thm}\label{thm}
 	Let $M$ be an $n=2m$ dimensional ($n\geq 3$) Riemannian manifold, for the nonminimal de Rham-Hodge operator $\widetilde{D}$, the spectral metric functional $\mathscr{M}_{\widetilde{D}}$ and the spectral Einstein functional $\mathscr{N}_{\widetilde{D}}$ are equal to
 	
 \begin{align}
 	\mathscr{M}_{\widetilde{D}}&:=\mathrm{Wres}\bigg(\widetilde{c}(u)\widetilde{c}(v){\widetilde{D}^{-2m}}\bigg)=-2^{2m} \frac{2 \pi^{m}}{\Gamma\left(m\right)}\int_{M} (a_0b_0)^{-m+1}g(u,v)d{\rm Vol}_M;\nonumber\\
 	\mathscr{N}_{\widetilde{D}}&:=\mathrm{Wres}\bigg(\widetilde{c}(u)\big(\widetilde{D}\widetilde{c}(v)+\widetilde{c}(v)\widetilde{D}\big)\widetilde{D}^{-2m+1}\bigg)
 =-2^{2m} \frac{2 \pi^{m}}{\Gamma\left(m\right)}\int_{M}\frac{(a_0b_0)^{-m+2}}{6}\mathbb{G}(u,v)d{\rm Vol}_M,\nonumber
 \end{align}
 where  $g(u,v)=\sum_{a,b=1}^{n}u_{a} v_{b} $, $ \mathbb{G}(u,v)=\operatorname{Ric}(u,v)-\frac{1}{2} s g(u,v) $, $\widetilde{c}(u)=\sum_{\eta=1}^{n} u_{\eta}\widetilde{c}(e_{\eta}), \widetilde{c}(v)=\sum_{\gamma=1}^{n} v_{\gamma}\widetilde{c}(e_{\gamma})$ and $\widetilde{c}(e_j)=a_0\epsilon(e_j^*)-b_0\iota(e_j)$.
 \end{thm}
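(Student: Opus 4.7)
The plan rests on the reduction $\widetilde{D}^{2}=a_0b_0\widetilde{D}_0^{2}$ established in Section \ref{section:2}, which yields
\begin{equation*}
\widetilde{D}^{-2m}=(a_0b_0)^{-m}\widetilde{D}_0^{-2m},\quad \widetilde{D}^{-2m+2}=(a_0b_0)^{-m+1}\widetilde{D}_0^{-2m+2},\quad \widetilde{D}^{-2m+1}=(a_0b_0)^{-m}\widetilde{D}\,\widetilde{D}_0^{-2m},
\end{equation*}
so that every residue can ultimately be extracted from the symbols of $\widetilde{D}_0^{-2m}$ supplied by Lemma \ref{lem2}. An elementary calculation from $\widetilde{c}(e_j)=a_0\epsilon(e_j^{*})-b_0\iota(e_j)$ and the standard identity $\epsilon(e_i^{*})\iota(e_j)+\iota(e_j)\epsilon(e_i^{*})=\delta_{ij}$ gives the deformed Clifford relation $\widetilde{c}(e_i)\widetilde{c}(e_j)+\widetilde{c}(e_j)\widetilde{c}(e_i)=-2a_0b_0\,\delta_{ij}$; combining this with cyclicity of the bundle trace on $\Lambda^{*}T^{*}M$ (of rank $2^{2m}$) produces the key identity $\mathrm{tr}(\widetilde{c}(u)\widetilde{c}(v))=-2^{2m}a_0b_0\,g(u,v)$.

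For $\mathscr{M}_{\widetilde{D}}$ the argument is short. Since $\widetilde{c}(u)\widetilde{c}(v)$ is an order-zero multiplication operator, the $-n$-homogeneous symbol of the composition at a normal-coordinate base point $x_0$ is simply $(a_0b_0)^{-m}\|\xi\|^{-2m}\widetilde{c}(u)\widetilde{c}(v)$, read off from the first line of Lemma \ref{lem2} (the $O(\mathbf{x}^{3})$ remainder vanishes at $x_0=0$). Taking the bundle trace and using $\mathrm{Vol}(S^{n-1})=2\pi^{m}/\Gamma(m)$ to integrate on $|\xi|=1$ yields the asserted formula for $\mathscr{M}_{\widetilde{D}}$.

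For $\mathscr{N}_{\widetilde{D}}$ I would first split the anticommutator inside the residue:
\begin{equation*}
\mathscr{N}_{\widetilde{D}}=\mathrm{Wres}\bigl(\widetilde{c}(u)\widetilde{D}\widetilde{c}(v)\widetilde{D}^{-2m+1}\bigr)+\mathrm{Wres}\bigl(\widetilde{c}(u)\widetilde{c}(v)\widetilde{D}^{-2m+2}\bigr).
\end{equation*}
Both summands have order $-n+2$, so Wres requires the symbol two orders below the principal. Applying the composition formula $\sigma(AB)(x,\xi)=\sum_{\alpha}\frac{(-i)^{|\alpha|}}{\alpha!}\partial_{\xi}^{\alpha}\sigma(A)\,\partial_{x}^{\alpha}\sigma(B)$ together with all three lines of Lemma \ref{lem2} (and $\sigma_{1}(\widetilde{D})|_{x_{0}}=i\widetilde{c}(\xi)$, since the subleading part of $\widetilde{D}$ vanishes at the normal-coordinate origin) produces the $-n$ symbol at $x_0$ as an explicit polynomial in $\xi$ whose coefficients involve Ricci, the Riemann tensor contracted into $c\,c$ or $\widehat{c}\,\widehat{c}$, and the scalar curvature $s$. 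I would then take the bundle trace using the standard identities $\mathrm{tr}(c(e_s)c(e_t))=-2^{n}\delta_{st}$, $\mathrm{tr}(\widehat{c}(e_s)\widehat{c}(e_t))=2^{n}\delta_{st}$, the vanishing of traces of odd products of $c$'s or $\widehat{c}$'s, and the factorization property for mixed $c$-$\widehat{c}$ products, and finally integrate the remaining $\xi$-monomials using the formulas $I_{S_n}^{\gamma_1\gamma_2}$ and $I_{S_n}^{\gamma_1\gamma_2\gamma_3\gamma_4}$ recalled just before the theorem.

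The main obstacle is the curvature bookkeeping that assembles the Einstein combination $\mathbb{G}(u,v)$. Each summand of $\mathscr{N}_{\widetilde{D}}$ generates both a Ricci-type contribution (partly from $\sigma_{-2m-1}$ and $\sigma_{-2m-2}$ in Lemma \ref{lem2}, partly from $I_{S_n}^{abcd}$ contracting a Riemann tensor via the first Bianchi identity) and a scalar-curvature contribution (from $-\tfrac14 ms$ in $\sigma_{-2m-2}$ and from the $R_{bats}c\,c$, $R_{bats}\widehat{c}\widehat{c}$ pieces after tracing). Marshalling these contributions with the correct $(a_0b_0)$-powers so that the Ricci and scalar parts appear in the relative proportion $-\tfrac12$, and verifying that the overall numerical constant is exactly $\tfrac16$, is the delicate step that ultimately produces $\mathbb{G}(u,v)=\mathrm{Ric}(u,v)-\tfrac12 s\,g(u,v)$ with the announced prefactor $(a_0b_0)^{-m+2}/6$.
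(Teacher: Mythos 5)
Your proposal follows essentially the same route as the paper: the reduction $\widetilde{D}^{2}=a_0b_0\widetilde{D}_0^{2}$, the splitting of $\mathscr{N}_{\widetilde{D}}$ into $\mathrm{Wres}\big(\widetilde{c}(u)\widetilde{D}\widetilde{c}(v)\widetilde{D}\widetilde{D}^{-2m}\big)+\mathrm{Wres}\big(\widetilde{c}(u)\widetilde{c}(v)\widetilde{D}^{-2m+2}\big)$, the composition formula applied to the symbols of Lemma \ref{lem2}, the deformed Clifford trace identities from \eqref{ee1}, and the monomial sphere integrals are exactly the ingredients of the paper's proof, and your short argument for $\mathscr{M}_{\widetilde{D}}$ (via $\mathrm{tr}(\widetilde{c}(u)\widetilde{c}(v))=-2^{2m}a_0b_0\,g(u,v)$) is the one the paper calls ``obvious.'' The only caveat is that the decisive curvature bookkeeping is left as a plan rather than executed; in carrying it out, make sure to keep the terms coming from $\partial_{x}$-derivatives of the zeroth-order (connection) symbol of $\widetilde{D}$ in $\sigma_0(\mathcal{PQ})$, since these produce the nonvanishing contribution $a_0^2b_0^2\big(\tfrac14 s\,g(u,v)-\tfrac12\mathrm{Ric}(u,v)\big)$ of the paper's step (I-1) and are not visible from Lemma \ref{lem2} alone.
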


\begin{proof}
The proof of the $\mathscr{M}_{\widetilde{D}}$ formula for the measure function is obvious. Before we prove that the spectral Einstein function $\mathscr{N}_{\widetilde{D}}$, splitting it into two parts as follows:
\begin{align}
\mathscr{N}_{\widetilde{D}}&=\mathrm{Wres}\bigg(\widetilde{c}(u)\widetilde{D}\widetilde{c}(v) \widetilde{D}\widetilde{D}^{-2m}\bigg)+\mathrm{Wres}\bigg(\widetilde{c}(u)\widetilde{c}(v){\widetilde{D}^{-2m+2}}\bigg)\nonumber\\
&=(a_0b_0)^{-m}\mathrm{Wres}\bigg(\widetilde{c}(u)\widetilde{D}\widetilde{c}(v)\widetilde{D}\widetilde{D}_0^{-2m}\bigg)+(a_0b_0)^{-m+1}\mathrm{Wres}\bigg(\widetilde{c}(u)\widetilde{c}(v)\widetilde{D}_0^{-2m+2}\bigg)\nonumber\\
&:=\mathscr{N}_{1}+\mathscr{N}_{2}.\nonumber
\end{align}

{\bf Part I)} $\mathscr{N}_{1}=(a_0b_0)^{-m}\mathrm{Wres}\bigg(\widetilde{c}(u)\widetilde{D}\widetilde{c}(v)\widetilde{D}\widetilde{D}_0^{-2m}\bigg)$.

Let $\widetilde{c}(u)\widetilde{D}:=\mathcal{P},\widetilde{c}(v)\widetilde{D}:=\mathcal{Q} .$
By \eqref{wers}, we need to compute  $\int_{\wedge^*T^*M} \operatorname{tr}\big[\sigma_{-2 m}(\mathcal{P} \mathcal{Q} \widetilde{D}_0
^{-2 m})\big](x, \xi) $. Based on the algorithm yielding the principal symbol of a product of pseudo-differential operators in terms of the principal symbols of the factors, we have
\begin{align}\label{ABD}
	&\sigma_{-2 m}\left(\mathcal{P} \mathcal{Q} \widetilde{D}_0^{-2 m}\right)  \nonumber\\
     =&\left\{\sum_{|\alpha|=0}^{\infty} \frac{(-i)^{|\alpha|}}{\alpha!} \partial_{\xi}^{\alpha}\big[\sigma(\mathcal{P} \mathcal{Q})\big] \partial_{x}^{\alpha}\big[\sigma(\widetilde{D}_0^{-2 m})\big]\right\}_{-2 m} \nonumber\\
	 =&\sigma_{0}(\mathcal{P} \mathcal{Q}) \sigma_{-2 m}(\widetilde{D}_0^{-2 m})+\sigma_{1}(\mathcal{P} \mathcal{Q}) \sigma_{-2 m-1}(\widetilde{D}_0^{-2 m})+\sigma_{2}(\mathcal{P} \mathcal{Q}) \sigma_{-2 m-2}(\widetilde{D}_0^{-2 m}) \nonumber\\
	& +(-i) \sum_{j=1}^{2m} \partial_{\xi_{j}}\big[\sigma_{2}(\mathcal{P} \mathcal{Q})\big] \partial_{x_{j}}\big[\sigma_{-2 m-1}(\widetilde{D}_0^{-2 m})\big]+(-i) \sum_{j=1}^{2m} \partial_{\xi_{j}}\big[\sigma_{1}(\mathcal{P} \mathcal{Q})\big] \partial_{x_{j}}\big[\sigma_{-2 m}(\widetilde{D}_0^{-2 m})\big] \nonumber\\
	& -\frac{1}{2} \sum_{j,l=1}^{2m} \partial_{\xi_{j}} \partial_{\xi_{l}}\big[\sigma_{2}(\mathcal{P} \mathcal{Q})\big] \partial_{x_{j}} \partial_{x_{l}}\big[\sigma_{-2 m}(\widetilde{D}_0^{-2 m})\big] .
\end{align}

 \begin{lem}
The symbols of  $\mathcal{P}$  and  $\mathcal{Q}$  are given:
\begin{align}
&\sigma_{1}(\mathcal{P})=\sqrt{-1}\widetilde{c}(u)\widetilde{c}(\xi); \nonumber\\
	&\sigma_{1}(\mathcal{Q})=\sqrt{-1}\widetilde{c}(v)\widetilde{c}(\xi);\nonumber\\
&\sigma_{0}(\mathcal{P})=-\frac{1}{4} \sum_{p,s,t=1}^{2m} w_{st}(e_p)\widetilde{c}(u)\widetilde{c}(e_p)c(e_s)c(e_t)+\frac{1}{4} \sum_{p,s,t=1}^{2m} w_{st}(e_p)\widetilde{c}(u)\widetilde{c}(e_p)\widehat{c}(e_s)\widehat{c}(e_t);\nonumber\\
	&\sigma_{0}(\mathcal{Q})=-\frac{1}{4} \sum_{p,s,t=1}^{2m} w_{st}(e_p)\widetilde{c}(v)\widetilde{c}(e_p)c(e_s)c(e_t)+\frac{1}{4} \sum_{p,s,t=1}^{2m} w_{st}(e_p)\widetilde{c}(v)\widetilde{c}(e_p)\widehat{c}(e_s)\widehat{c}(e_t).\nonumber
\end{align}
 \end{lem}

Further, by the composition formula of pseudo-differential operators, we get the following lemma.

 \begin{lem}
	The symbols of $\mathcal{PQ}$ are given:
	\begin{align}
		\sigma_{0}(\mathcal{PQ})=&\sigma_{0}(\mathcal{P}) \sigma_{0}(\mathcal{Q})+(-i) \partial_{\xi_{j}}\left[\sigma_{1}(\mathcal{P})\right] \partial_{x_{j}}\left[\sigma_{0}(\mathcal{Q})\right]+(-i) \partial_{\xi_{j}}\left[\sigma_{0}(\mathcal{P})\right] \partial_{x_{j}}\left[\sigma_{1}(\mathcal{Q})\right]\nonumber\\
		=&\;\;\;\frac{1}{16}\sum_{p,s,t,\hat{p},\hat{s},\hat{t}=1}^{2m} w_{st}(e_p) w_{\hat{s} \hat{t}}(e_{\hat{p}})\widetilde{c}(u)\widetilde{c}(e_p)c(e_s)c(e_t)\widetilde{c}(v)\widetilde{c}(e_{\hat{p}})c(e_{\hat{s}})c(e_{\hat{t}})\nonumber\\
		&-\frac{1}{16}\sum_{p,s,t,\hat{p},\hat{s},\hat{t}=1}^{2m} w_{st}(e_p) w_{\hat{s} \hat{t}}(e_{\hat{p}})\widetilde{c}(u)\widetilde{c}(e_p)c(e_s)c(e_t)\widetilde{c}(v)\widetilde{c}(e_{\hat{p}})\widehat{c}(e_{\hat{s}})\widehat{c}(e_{\hat{t}})\nonumber\\
		&-\frac{1}{16}\sum_{p,s,t,\hat{p},\hat{s},\hat{t}=1}^{2m} w_{st}(e_p) w_{\hat{s} \hat{t}}(e_{\hat{p}})\widetilde{c}(u)\widetilde{c}(e_p)\widehat{c}(e_s)\widehat{c}(e_t)\widetilde{c}(v)\widetilde{c}(e_{\hat{p}})c(e_{\hat{s}})c(e_{\hat{t}})\nonumber\\
		&+\frac{1}{16}\sum_{p,s,t,\hat{p},\hat{s},\hat{t}=1}^{2m} w_{st}(e_p) w_{\hat{s} \hat{t}}(e_{\hat{p}})\widetilde{c}(u)\widetilde{c}(e_p)\widehat{c}(e_s)\widehat{c}(e_t)\widetilde{c}(v)\widetilde{c}(e_{\hat{p}})\widehat{c}(e_{\hat{s}})\widehat{c}(e_{\hat{t}})\nonumber\\
&+\frac{1}{8}\sum_{j,\hat{p},\hat{s},\hat{t}=1}^{2m} {\operatorname{R}}_{j\hat{p}\hat{t}\hat{s}}\widetilde{c}(u)\widetilde{c}(dx_j)\widetilde{c}(v)\widetilde{c}(e_{\hat{p}})c(e_{\hat{s}})c(e_{\hat{t}})\nonumber\\
&-\frac{1}{8}\sum_{j,\hat{p},\hat{s},\hat{t}=1}^{2m} {\operatorname{R}}_{j\hat{p}\hat{t}\hat{s}}\widetilde{c}(u)\widetilde{c}(dx_j)\widetilde{c}(v)\widetilde{c}(e_{\hat{p}})\widehat{c}(e_{\hat{s}})\widehat{c}(e_{\hat{t}}) \nonumber\\
&+\frac{1}{4}\sum_{j,\hat{p},\hat{s},\hat{t},\gamma=1}^{2m}w_{\hat{s} \hat{t}}(e_{\hat{p}})\partial {x_j}(v_\gamma)\widetilde{c}(u)\widetilde{c}(dx_j)\widetilde{c}(e_\gamma)\widetilde{c}(e_{\hat{p}})\widehat{c}(e_{\hat{s}})\widehat{c}(e_{\hat{t}})\nonumber\\
&-\frac{1}{4}\sum_{j,\hat{p},\hat{s},\hat{t},\gamma=1}^{2m}w_{\hat{s} \hat{t}}(e_{\hat{p}})\partial {x_j}(v_\gamma)\widetilde{c}(u)\widetilde{c}(dx_j)\widetilde{c}(e_\gamma)\widetilde{c}(e_{\hat{p}})c(e_{\hat{s}})c(e_{\hat{t}});\nonumber\\
		\sigma_{1}(\mathcal{PQ})=&\sigma_{1}(\mathcal{P}) \sigma_{0}(\mathcal{Q})+\sigma_{0}(\mathcal{P}) \sigma_{1}(\mathcal{Q})+(-i) \partial_{\xi_{j}}\left[\sigma_{1}(\mathcal{P})\right] \partial_{x_{j}}\left[\sigma_{1}(\mathcal{Q})\right] \nonumber\\
		=&-\frac{i}{4}\sum_{p,s,t=1}^{2m} w_{st}(e_p) \widetilde{c}(u)\widetilde{c}(\xi)\widetilde{c}(v)\widetilde{c}(e_p)c(e_s)c(e_t)+\frac{i}{4}\sum_{p,s,t=1}^{2m} w_{st}(e_p) \widetilde{c}(u)\widetilde{c}(\xi)\widetilde{c}(v)\widetilde{c}(e_p)\widehat{c}(e_s)\widehat{c}(e_t)\nonumber\\
		&-\frac{i}{4}\sum_{p,s,t=1}^{2m} w_{st}(e_p) \widetilde{c}(u)\widetilde{c}(e_p)c(e_s)c(e_t)\widetilde{c}(v)\widetilde{c}(\xi)\nonumber+\frac{i}{4}\sum_{p,s,t=1}^{2m} w_{st}(e_p) \widetilde{c}(u)\widetilde{c}(e_p)\widehat{c}(e_s)\widehat{c}(e_t)\widetilde{c}(v)\widetilde{c}(\xi)\nonumber\\
		&+i\sum_{j,\gamma=1}^{2m}\partial x_j(v_\gamma)\widetilde{c}(v)\widetilde{c}(dx_j)\widetilde{c}(e_\gamma)\widetilde{c}(\xi);\nonumber\\
		\sigma_{2}(\mathcal{PQ})=&\sigma_{1}(\mathcal{P})\sigma_{1}(\mathcal{Q})=-\widetilde{c}(u)\widetilde{c}(\xi)\widetilde{c}(v)\widetilde{c}(\xi).\nonumber
	\end{align}
\end{lem}

\noindent {\bf (I-1)} For $\sigma_{0}(\mathcal{PQ}) \sigma_{-2 m}(\widetilde{D}_0^{-2 m})(x_{0})$:
\begin{align}\label{0-2m}
\sigma_{0}(\mathcal{PQ}) \sigma_{-2 m}(\widetilde{D}_0^{-2 m})(x_{0})
=&\frac{1}{8}\|\xi\|^{-2 m}\sum_{j,\hat{p},\hat{s},\hat{t}=1}^{2m}  {\operatorname{R}}_{j \hat{p} \hat{t} \hat{s}}\widetilde{c}(u)\widetilde{c}(dx_j)\widetilde{c}(v)\widetilde{c}(e_{\hat{p}})c(e_{\hat{s}})c(e_{\hat{t}})\nonumber\\
&-\frac{1}{8}\|\xi\|^{-2 m}\sum_{j,\hat{p},\hat{s},\hat{t}=1}^{2m}  {\operatorname{R}}_{j \hat{p} \hat{t} \hat{s}}\widetilde{c}(u)\widetilde{c}(dx_j)\widetilde{c}(v)\widetilde{c}(e_{\hat{p}})\widehat{c}(e_{\hat{s}})\widehat{c}(e_{\hat{t}}).
\end{align}
By calculation, we have the following equalities:
\begin{align}\label{ee1}
&\widetilde{c}(X)c(Y)+c(Y)\widetilde{c}(X)=-(a_0+b_0)g^{TM}(X,Y);\nonumber\\
&\widetilde{c}(X)\widetilde{c}(Y)+\widetilde{c}(Y)\widetilde{c}(X)=-2a_0b_0g^{TM}(X,Y);\nonumber\\
&\widetilde{c}(X)\widehat{c}(Y)+\widehat{c}(Y)\widetilde{c}(X)=(a_0-b_0)g^{TM}(X,Y).
\end{align}

 \noindent{\bf (I-1-$\mathbb{A}$)}By \eqref{ee1}, we have 

\begin{align}\label{t2}
	&{\rm tr}\bigg(\sum_{j,\hat{p},\hat{s},\hat{t}=1}^{2m}\widetilde{c}(u)\widetilde{c}(e_j)\widetilde{c}(v)\widetilde{c}(e_{\hat{p}})c(e_{\hat{s}})c(e_{\hat{t}})\bigg)\nonumber\\
	=&\sum_{j,\hat{p},\hat{s},\hat{t},r,f=1}^{2m}u_rv_f\bigg[a_0^2b_0^2\Big(\delta_r^f\delta_j^{\hat{p}}\delta_{\hat{s}}^{\hat{t}}-\delta_r^j\delta_f^{\hat{p}}\delta_{\hat{s}}^{\hat{t}}-\delta_r^{\hat{p}}\delta_j^f\delta_{\hat{s}}^{\hat{t}}\Big)+\frac{a_0b_0(a_0+b_0)^2}{4}\Big(\delta_r^j\delta_f^{\hat{s}}\delta_{\hat{p}}^{\hat{t}}-\delta_r^j\delta_f^{\hat{t}}\delta_{\hat{p}}^{\hat{s}}-\delta_r^f\delta_j^{\hat{s}}\delta_{\hat{p}}^{\hat{t}}\nonumber\\
&+\delta_r^f\delta_j^{\hat{t}}\delta_{\hat{p}}^{\hat{s}}+\delta_r^{\hat{p}}\delta_j^{\hat{s}}\delta_f^{\hat{t}}-\delta_r^{\hat{p}}\delta_j^{\hat{t}}\delta_f^{\hat{s}}+\delta_r^{\hat{s}}\delta_j^f\delta_{\hat{p}}^{\hat{t}}-\delta_r^{\hat{s}}\delta_j^{\hat{p}}\delta_f^{\hat{t}}+\delta_r^{\hat{s}}\delta_j^{\hat{t}}\delta_f^{\hat{p}}
-\delta_r^{\hat{t}}\delta_j^f\delta_{\hat{p}}^{\hat{s}}+\delta_r^{\hat{t}}\delta_j^{\hat{p}}\delta_f^{\hat{s}}-\delta_r^{\hat{t}}\delta_j^{\hat{s}}\delta_f^{\hat{p}}\Big)\bigg]{\rm tr}[id],
\end{align}
then
\begin{align}
	&\int_{\|\xi\|=1}\operatorname{tr}\biggl\{ \frac{1}{8}\|\xi\|^{-2 m}\sum_{j,\hat{p},\hat{s},\hat{t}=1}^{2m}  {\operatorname{R}}_{j \hat{p} \hat{t} \hat{s}}\widetilde{c}(u)\widetilde{c}(dx_j)\widetilde{c}(v)\widetilde{c}(e_{\hat{p}})c(e_{\hat{s}})c(e_{\hat{t}}) \biggr\}(x_0)\sigma(\xi)\nonumber\\
=&\frac{a_0b_0(a_0+b_0)^2}{4}\bigg(\frac{1}{4}g(u,v)\sum_{j,\hat{p}=1}^{2m}R(e_j,e_{\hat{p}},e_j,e_{\hat{p}})-\frac{1}{2}\sum_{\hat{p}=1}^{2m}R(u,e_{\hat{p}},v,e_{\hat{p}})\bigg){\rm Vol}(S^{n-1}){\rm tr}[id]\nonumber\\
=&\frac{a_0b_0(a_0+b_0)^2}{4}\bigg(\frac{1}{4} s g(u,v)-\frac{1}{2}{\rm Ric}(u,v)\bigg){\rm Vol}(S^{n-1}){\rm tr}[id].
\end{align}

\noindent{\bf (I-1-$\mathbb{B}$)}By \eqref{ee1}, we also have

\begin{align}\label{t2}
	&{\rm tr}\bigg(\sum_{j,\hat{p},\hat{s},\hat{t}=1}^{2m}\widetilde{c}(u)\widetilde{c}(e_j)\widetilde{c}(v)\widetilde{c}(e_{\hat{p}})\widehat{c}(e_{\hat{s}})\widehat{c}(e_{\hat{t}})\bigg)\nonumber\\
	=&\sum_{j,\hat{p},\hat{s},\hat{t},r,f=1}^{2m}u_rv_f\bigg[a_0^2b_0^2\Big(\delta_r^j\delta_f^{\hat{p}}\delta_{\hat{s}}^{\hat{t}}-\delta_r^f\delta_j^{\hat{p}}\delta_{\hat{s}}^{\hat{t}}+\delta_r^{\hat{p}}\delta_j^f\delta_{\hat{s}}^{\hat{t}}\Big)+\frac{a_0b_0(a_0-b_0)^2}{4}\Big(\delta_r^j\delta_f^{\hat{s}}\delta_{\hat{p}}^{\hat{t}}-\delta_r^j\delta_f^{\hat{t}}\delta_{\hat{p}}^{\hat{s}}-\delta_r^f\delta_j^{\hat{s}}\delta_{\hat{p}}^{\hat{t}}\nonumber\\
&+\delta_r^f\delta_j^{\hat{t}}\delta_{\hat{p}}^{\hat{s}}+\delta_r^{\hat{p}}\delta_j^{\hat{s}}\delta_f^{\hat{t}}-\delta_r^{\hat{p}}\delta_j^{\hat{t}}\delta_f^{\hat{s}}+\delta_r^{\hat{s}}\delta_j^f\delta_{\hat{p}}^{\hat{t}}-\delta_r^{\hat{s}}\delta_j^{\hat{p}}\delta_f^{\hat{t}}+\delta_r^{\hat{s}}\delta_j^{\hat{t}}\delta_f^{\hat{p}}
-\delta_r^{\hat{t}}\delta_j^f\delta_{\hat{p}}^{\hat{s}}+\delta_r^{\hat{t}}\delta_j^{\hat{p}}\delta_f^{\hat{s}}-\delta_r^{\hat{t}}\delta_j^{\hat{s}}\delta_f^{\hat{p}}\Big)\bigg]{\rm tr}[id],
\end{align}
then
\begin{align}
	&\int_{\|\xi\|=1}\operatorname{tr}\biggl\{ \frac{1}{8}\|\xi\|^{-2 m}\sum_{j,\hat{p},\hat{s},\hat{t}=1}^{2m}  {\operatorname{R}}_{j \hat{p} \hat{t} \hat{s}}\widetilde{c}(u)\widetilde{c}(dx_j)\widetilde{c}(v)\widetilde{c}(e_{\hat{p}})\widehat{c}(e_{\hat{s}})\widehat{c}(e_{\hat{t}}) \biggr\}(x_0)\sigma(\xi)\nonumber\\
=&\frac{a_0b_0(a_0-b_0)^2}{4}\bigg(\frac{1}{4}g(u,v)\sum_{j,\hat{p}=1}^{2m}R(e_j,e_{\hat{p}},e_j,e_{\hat{p}})-\frac{1}{2}\sum_{\hat{p}=1}^{2m}R(u,e_{\hat{p}},v,e_{\hat{p}})\bigg){\rm Vol}(S^{n-1}){\rm tr}[id]\nonumber\\
=&\frac{a_0b_0(a_0-b_0)^2}{4}\bigg(\frac{1}{4} s g(u,v)-\frac{1}{2}{\rm Ric}(u,v)\bigg){\rm Vol}(S^{n-1}){\rm tr}[id].
\end{align}
Therefore, we get
\begin{align}
	&\int_{\|\xi\|=1} \operatorname{tr}\big[\sigma_{0}(\mathcal{PQ}) \sigma_{-2 m}(\widetilde{D}_0^{-2 m})(x_{0})\big] \sigma(\xi)\nonumber\\
	=&a_0^2b_0^2\bigg(\frac{1}{4} s g(u,v)-\frac{1}{2}{\rm Ric}(u,v)\bigg){\rm Vol}(S^{n-1}){\rm tr}[id].
\end{align}

\noindent{\bf (I-2)} For $\sigma_{1}(\mathcal{PQ}) \sigma_{-2 m-1}(\widetilde{D}_0^{-2 m})(x_{0})$:\\

\noindent Obviously, we have
\begin{align}
	&\sigma_{1}(\mathcal{PQ}) \sigma_{-2 m-1}(\widetilde{D}_0^{-2 m})(x_{0})=0,\nonumber
\end{align}
so
\begin{align}
	&\int_{\|\xi\|=1} \operatorname{tr}\big[\sigma_{1}(\mathcal{PQ}) \sigma_{-2 m-1}(\widetilde{D}_0^{-2 m})(x_{0})\big] \sigma(\xi)=0.\nonumber
\end{align}

\noindent{\bf (I-3)} For $\sigma_{2}(\mathcal{PQ}) \sigma_{-2 m-2}(\widetilde{D}_0^{-2 m})(x_{0})$:
\begin{align}\label{2-2m-2}
&\sigma_{2}(\mathcal{PQ}) \sigma_{-2 m-2}(\widetilde{D}_0^{-2 m})(x_{0})\nonumber\\
=&-\frac{m(m+1)}{3}\|\xi\|^{-2 m-4}\sum_{a,b=1}^{2m} \operatorname{Ric}_{a b} \xi_{a} \xi_{b}\widetilde{c}(u)\widetilde{c}(\xi)\widetilde{c}(v)\widetilde{c}(\xi) \nonumber\\
&+\frac{ m (m+1)}{4}\|\xi\|^{-2 m-4} \sum_{a,b,s,t=1}^{2m}\operatorname{R}_{b a t s} \xi_{a} \xi_{b}\widetilde{c}(u)\widetilde{c}(\xi)\widetilde{c}(v)\widetilde{c}(\xi)c(e_s) c(e_t)\nonumber\\
&-\frac{ m (m+1)}{4}\|\xi\|^{-2 m-4} \sum_{a,b,s,t=1}^{2m}\operatorname{R}_{b a t s} \xi_{a} \xi_{b}\widetilde{c}(u)\widetilde{c}(\xi)\widetilde{c}(v)\widetilde{c}(\xi)\widehat{c}(e_s) \widehat{c}(e_t) \nonumber\\
&+\frac{ m }{8}\|\xi\|^{-2 m-2}\sum_{i,j,k,l=1}^{2m}R_{ijkl}\widetilde{c}(u)\widetilde{c}(\xi)\widetilde{c}(v)\widetilde{c}(\xi)\widehat{c}(e_i) \widehat{c}(e_j)c(e_k)c(e_l)\nonumber\\
&+\frac{m}{4}\|\xi\|^{-2 m-2}s\widetilde{c}(u)\widetilde{c}(\xi)\widetilde{c}(v)\widetilde{c}(\xi).
\end{align}

\noindent{\bf (I-3-$\mathbb{A}$)}Due to
\begin{align}\label{t7}
	{\rm tr}\big[\widetilde{c}(u)\widetilde{c}(\xi)\widetilde{c}(v)\widetilde{c}(\xi)\big]=\sum_{f,g=1}^{2m}\xi_f\xi_g{\rm tr}\big[\widetilde{c}(u)\widetilde{c}(e_f)\widetilde{c}(v)\widetilde{c}(e_g)\big],
\end{align}
then
\begin{align}
	&\int_{\|\xi\|=1} \operatorname{tr}\biggl\{-\frac{m(m+1)}{3}\|\xi\|^{-2 m-4}\sum_{a,b=1}^{2m} \operatorname{Ric}_{a b} \xi_{a} \xi_{b} \widetilde{c}(u)\widetilde{c}(\xi)\widetilde{c}(v)\widetilde{c}(\xi)\biggr\}(x_0)\sigma(\xi)\nonumber\\
=&-\frac{m(m+1)}{3}\times\frac{1}{2m(2m+2)}\sum_{a,b,f,g=1}^{2m}\operatorname{Ric}_{a b}(\delta_a^b\delta_f^g+\delta_a^f\delta_b^g+\delta_a^g\delta_b^f){\rm tr}\big[\widetilde{c}(u)\widetilde{c}(e_f)\widetilde{c}(v)\widetilde{c}(e_g)\big]{\rm Vol}(S^{n-1})\nonumber\\
=&-\frac{1}{12}\sum_{l,a,f=1}^{2m}R(e_l,e_a,e_l,e_a){\rm tr}\big[\widetilde{c}(u)\widetilde{c}(e_f)\widetilde{c}(v)\widetilde{c}(e_f)\big]{\rm Vol}(S^{n-1})\nonumber\\
&-\frac{1}{12}\sum_{l,a,b=1}^{2m}R(e_l,e_a,e_l,e_b){\rm tr}\big[\widetilde{c}(u)\widetilde{c}(e_a)\widetilde{c}(v)\widetilde{c}(e_b)\big]{\rm Vol}(S^{n-1})\nonumber\\
&-\frac{1}{12}\sum_{l,a,b=1}^{2m}R(e_l,e_a,e_l,e_b){\rm tr}\big[\widetilde{c}(u)\widetilde{c}(e_b)\widetilde{c}(v)\widetilde{c}(e_a)\big]{\rm Vol}(S^{n-1})\nonumber\\
=&a_0^2b_0^2\bigg(\frac{m}{6} s g(u,v)-\frac{1}{3}{\rm Ric}(u,v)\bigg){\rm Vol}(S^{n-1}){\rm tr}[id].
\end{align}

\noindent{\bf (I-3-$\mathbb{B}$)}

\begin{align}
	&\int_{\|\xi\|=1}\operatorname{tr} \biggl\{\frac{ m (m+1)}{4}\|\xi\|^{-2 m-4} \sum_{a,b,s,t=1}^{2m}\operatorname{R}_{b a t s} \xi_{a} \xi_{b}\widetilde{c}(u)\widetilde{c}(\xi)\widetilde{c}(v)\widetilde{c}(\xi)c(e_s) c(e_t) \biggr\}(x_0)\sigma(\xi)\nonumber\\
=&\frac{ m (m+1)}{4}\times\frac{1}{2m(2m+2)}\sum_{a,b,f,g,s,t=1}^{2m}\operatorname{R}_{b a t s} (\delta_a^b\delta_f^g+\delta_a^f\delta_b^g+\delta_a^g\delta_b^f) \nonumber\\
&\times{\rm tr}\big[\widetilde{c}(u)\widetilde{c}(e_f)\widetilde{c}(v)\widetilde{c}(e_g)c(e_s) c(e_t)\big]{\rm Vol}(S^{n-1})\nonumber\\
=&\frac{1}{16}\sum_{a,b,s,t=1}^{2m}\operatorname{R}_{b a t s} {\rm tr}\big[\widetilde{c}(u)\widetilde{c}(e_a)\widetilde{c}(v)\widetilde{c}(e_b)c(e_s) c(e_t)\big]{\rm Vol}(S^{n-1}) \nonumber\\
&+\frac{1}{16}\sum_{a,b,s,t=1}^{2m}\operatorname{R}_{b a t s} {\rm tr}\big[\widetilde{c}(u)\widetilde{c}(e_b)\widetilde{c}(v)\widetilde{c}(e_a)c(e_s) c(e_t)\big]{\rm Vol}(S^{n-1}) \nonumber\\
	=&0.\nonumber
\end{align}

\noindent{\bf (I-3-$\mathbb{C}$)}

\begin{align}
	&\int_{\|\xi\|=1}\operatorname{tr} \biggl\{-\frac{ m (m+1)}{4}\|\xi\|^{-2 m-4} \sum_{a,b,s,t=1}^{2m}\operatorname{R}_{b a t s} \xi_{a} \xi_{b}\widetilde{c}(u)\widetilde{c}(\xi)\widetilde{c}(v)\widetilde{c}(\xi)\widehat{c}(e_s) \widehat{c}(e_t) \biggr\}(x_0)\sigma(\xi)\nonumber\\
=&-\frac{ m (m+1)}{4}\times\frac{1}{2m(2m+2)}\sum_{a,b,f,g,s,t=1}^{2m}\operatorname{R}_{b a t s} (\delta_a^b\delta_f^g+\delta_a^f\delta_b^g+\delta_a^g\delta_b^f)\nonumber\\
&\times {\rm tr}\big[\widetilde{c}(u)\widetilde{c}(e_f)\widetilde{c}(v)\widetilde{c}(e_g)\widehat{c}(e_s) \widehat{c}(e_t)\big] {\rm Vol}(S^{n-1})\nonumber\\
=&-\frac{1}{16}\sum_{a,b,s,t=1}^{2m}\operatorname{R}_{b a t s} {\rm tr}\big[\widetilde{c}(u)\widetilde{c}(e_a)\widetilde{c}(v)\widetilde{c}(e_b)\widehat{c}(e_s) \widehat{c}(e_t)\big] {\rm Vol}(S^{n-1}) \nonumber\\
&-\frac{1}{16}\sum_{a,b,s,t=1}^{2m}\operatorname{R}_{b a t s} {\rm tr}\big[\widetilde{c}(u)\widetilde{c}(e_b)\widetilde{c}(v)\widetilde{c}(e_a)\widehat{c}(e_s) \widehat{c}(e_t)\big] {\rm Vol}(S^{n-1}) \nonumber\\
	=&0.\nonumber
\end{align}

\noindent{\bf (I-3-$\mathbb{D}$)}

\begin{align}\label{ee2}
	&\int_{\|\xi\|=1}\operatorname{tr} \biggl\{\frac{ m }{8}\|\xi\|^{-2 m-2}\sum_{i,j,k,l=1}^{2m}R_{ijkl}\widetilde{c}(u)\widetilde{c}(\xi)\widetilde{c}(v)\widetilde{c}(\xi)\widehat{c}(e_i) \widehat{c}(e_j)c(e_k)c(e_l)\biggr\}(x_0)\sigma(\xi)\nonumber\\
	=&\int_{\|\xi\|=1}\operatorname{tr} \biggl\{\frac{ m }{8}\|\xi\|^{-2 m-2}\sum_{i,j,k,l=1}^{2m}R_{ijkl}\xi_{f} \xi_{g}\widetilde{c}(u)\widetilde{c}(e_f)\widetilde{c}(v)\widetilde{c}(e_g)\widehat{c}(e_i) \widehat{c}(e_j)c(e_k)c(e_l)\biggr\}(x_0)\sigma(\xi)\nonumber\\
	=&\frac{ 1}{16}\sum_{i,j,k,l=1}^{2m}\operatorname{R}_{ijkl} {\rm tr}\big[\widetilde{c}(u)\widetilde{c}(e_f)\widetilde{c}(v)\widetilde{c}(e_f)\widehat{c}(e_i) \widehat{c}(e_j)c(e_k)c(e_l)\big]{\rm Vol}(S^{n-1}).
\end{align}
By computations, we have
\begin{align}\label{ee3}
&{\rm tr}\big[\widetilde{c}(u)\widetilde{c}(e_f)\widetilde{c}(v)\widetilde{c}(e_f)\widehat{c}(e_i) \widehat{c}(e_j)c(e_k)c(e_l)\big]\nonumber\\
=&(m-2)a_0b_0\times\sum_{i,j,k,l,r,p=1}^{2m}u_rv_p\bigg[a_0b_0\delta_k^l\delta_r^p\delta_i^j+\frac{(a_0-b_0)^2}{4}\Big(\delta_k^l\delta_r^j\delta_p^i-\delta_k^l\delta_r^i\delta_p^j\Big)\nonumber\\
&+\frac{(a_0+b_0)^2}{4}\Big(\delta_k^p\delta_l^r\delta_i^j-\delta_k^r\delta_l^p\delta_i^j\Big)\bigg]{\rm tr}[id].
\end{align}
Substituting \eqref{ee3} into \eqref{ee2}, we get
\begin{align}
&\int_{\|\xi\|=1}\operatorname{tr} \biggl\{\frac{ m }{8}\|\xi\|^{-2 m-2}\sum_{i,j,k,l=1}^{2m}R_{ijkl}\widetilde{c}(u)\widetilde{c}(\xi)\widetilde{c}(v)\widetilde{c}(\xi)\widehat{c}(e_i) \widehat{c}(e_j)c(e_k)c(e_l)\biggr\}(x_0)\sigma(\xi)=0.\nonumber
\end{align}

\noindent{\bf (I-3-$\mathbb{E}$)}

\begin{align}
	&\int_{\|\xi\|=1}\operatorname{tr} \biggl\{\frac{m}{4}\|\xi\|^{-2 m-2}s\widetilde{c}(u)\widetilde{c}(\xi)\widetilde{c}(v)\widetilde{c}(\xi)\biggr\}(x_0)\sigma(\xi)\nonumber\\
	=&\int_{\|\xi\|=1}\operatorname{tr} \biggl\{\frac{m}{4}\|\xi\|^{-2 m-2}s\xi_{f} \xi_{g}\widetilde{c}(u)\widetilde{c}(e_f)\widetilde{c}(v)\widetilde{c}(e_g)\biggr\}(x_0)\sigma(\xi)\nonumber\\
	=&\frac{ 1}{8}s{\rm tr}\big[\widetilde{c}(u)\widetilde{c}(e_f)\widetilde{c}(v)\widetilde{c}(e_f)\big]{\rm Vol}(S^{n-1}) \nonumber\\
	=&a_0^2b_0^2\frac{1-m}{4}sg(u,v){\rm Vol}(S^{n-1}){\rm tr}[id].
\end{align}
As a result, we obtain
\begin{align}
	&\int_{\|\xi\|=1} \operatorname{tr}\big[\sigma_{2}(\mathcal{PQ}) \sigma_{-2 m-2}(\widetilde{D}_0^{-2 m})(x_{0})\big] \sigma(\xi)\nonumber\\
	=&a_0^2b_0^2\bigg(\frac{3-m}{12} s g(u,v)-\frac{1}{3}{\rm Ric}(u,v)\bigg){\rm Vol}(S^{n-1}){\rm tr}[id].
\end{align}

\noindent{\bf (I-4)} For $-i\sum_{j=1}^{2m} \partial_{\xi_{j}}\big[\sigma_{2}(\mathcal{P} \mathcal{Q})\big] \partial_{x_{j}}\big[\sigma_{-2 m-1}(\widetilde{D}_0^{-2 m})\big](x_{0})$:

\begin{align}\label{2-2m-1}
	&-i\sum_{j=1}^{2m} \partial_{\xi_{j}}[\sigma_{2}(\mathcal{P} \mathcal{Q})] \partial_{x_{j}}[\sigma_{-2 m-1}(\widetilde{D}_0^{-2 m})](x_{0})\nonumber\\
	=&\;\;\frac{2 m }{3}\|\xi\|^{-2 m-2}\sum_{a,b,j=1}^{2m}  \operatorname{Ric}_{a b}  \xi_{a}\delta^{b}_{j} \bigg(\widetilde{c}(u)\widetilde{c}(dx_j)\widetilde{c}(v)\widetilde{c}(\xi)+\widetilde{c}(u)\widetilde{c}(\xi)\widetilde{c}(v)\widetilde{c}(dx_j)\bigg)\nonumber\\
	&-\frac{ m }{4}\|\xi\|^{-2 m-2} \sum_{a,b,j,s,t=1}^{2m} \operatorname{R}_{b a t s}\xi_{a}\delta^{b}_{j}\bigg(\widetilde{c}(u)\widetilde{c}(dx_j)\widetilde{c}(v)\widetilde{c}(\xi)+\widetilde{c}(u)\widetilde{c}(\xi)\widetilde{c}(v)\widetilde{c}(dx_j) \bigg) c(e_s) c(e_t) \nonumber\\
	&+\frac{ m }{4}\|\xi\|^{-2 m-2} \sum_{a,b,j,s,t=1}^{2m} \operatorname{R}_{b a t s}\xi_{a}\delta^{b}_{j}\bigg(\widetilde{c}(u)\widetilde{c}(dx_j)\widetilde{c}(v)\widetilde{c}(\xi)+\widetilde{c}(u)\widetilde{c}(\xi)\widetilde{c}(v)\widetilde{c}(dx_j) \bigg)\widehat{c}(e_s) \widehat{c}(e_t).
\end{align}

\noindent{\bf (I-4-$\mathbb{A}$)}

\begin{align}
	&\int_{\|\xi\|=1} {\rm tr}\biggl\{\frac{2 m }{3}\|\xi\|^{-2 m-2}\sum_{a,b,j=1}^{2m}  \operatorname{Ric}_{a b}  \xi_{a}\delta^{b}_{j}\bigg(\widetilde{c}(u)\widetilde{c}(dx_j)\widetilde{c}(v)\widetilde{c}(\xi)+\widetilde{c}(u)\widetilde{c}(\xi)\widetilde{c}(v)\widetilde{c}(dx_j)\bigg)\biggr\}(x_0)\sigma(\xi)\nonumber\\
	=&\int_{\|\xi\|=1}{\rm tr} \biggl\{\frac{2 m }{3}\|\xi\|^{-2 m-2}\sum_{a,b,g=1}^{2m}  \operatorname{Ric}_{a b}  \xi_{a}\xi_{g}\bigg(\widetilde{c}(u)\widetilde{c}(e_b)\widetilde{c}(v)\widetilde{c}(e_g)+\widetilde{c}(u)\widetilde{c}(e_g)\widetilde{c}(v)\widetilde{c}(e_b)\bigg)\biggr\}(x_0)\sigma(\xi)\nonumber\\
	=&\frac{1}{3}\sum_{a,b=1}^{2m}  \operatorname{Ric}_{a b}  {\rm tr}\big[\widetilde{c}(u)\widetilde{c}(e_b)\widetilde{c}(v)\widetilde{c}(e_a)+\widetilde{c}(u)\widetilde{c}(e_a)\widetilde{c}(v)\widetilde{c}(e_b)\big]{\rm Vol}(S^{n-1})\nonumber\\
	=&\frac{2}{3}\sum_{a,b=1}^{2m}  \operatorname{Ric}_{a b}  {\rm tr}\big[\widetilde{c}(u)\widetilde{c}(e_b)\widetilde{c}(v)\widetilde{c}(e_a)\big]{\rm Vol}(S^{n-1})\nonumber\\
	=&a_0^2b_0^2\bigg(\frac{4}{3}\operatorname{Ric}(u,v)-\frac{2}{3}s g(u,v)\bigg) {\rm Vol}(S^{n-1}){\rm tr}[id].
\end{align}

\noindent{\bf (I-4-$\mathbb{B}$)}

\begin{align}
	&\int_{\|\xi\|=1}\operatorname{tr} \biggl\{-\frac{ m }{4}\|\xi\|^{-2 m-2} \sum_{a,b,j,s,t=1}^{2m} \operatorname{R}_{b a t s}\xi_{a}\delta^{b}_{j}\bigg(\widetilde{c}(u)\widetilde{c}(dx_j)\widetilde{c}(v)\widetilde{c}(\xi) \nonumber\\
&+\widetilde{c}(u)\widetilde{c}(\xi)\widetilde{c}(v)\widetilde{c}(dx_j)\bigg) c(e_s) c(e_t)\biggr\}(x_0)\sigma(\xi)\nonumber\\
	=&\int_{\|\xi\|=1}\operatorname{tr} \biggl\{-\frac{ m }{4}\|\xi\|^{-2 m-2} \sum_{a,b,s,t,g=1}^{2m} \operatorname{R}_{b a t s}\xi_{a}\xi_{g}\bigg(\widetilde{c}(u)\widetilde{c}(e_b)\widetilde{c}(v)\widetilde{c}(e_g) \nonumber\\
& +\widetilde{c}(u)\widetilde{c}(e_g)\widetilde{c}(v)\widetilde{c}(e_b) \bigg)c(e_s) c(e_t)\biggr\}(x_0)\sigma(\xi)\nonumber\\
=&-\frac{1 }{8}\sum_{a,b,s,t=1}^{2m} \operatorname{R}_{b a t s}\operatorname{tr}\big[\widetilde{c}(u)\widetilde{c}(e_b)\widetilde{c}(v)\widetilde{c}(e_a) c(e_s) c(e_t)\big]{\rm Vol}(S^{n-1})\nonumber\\
&-\frac{1 }{8}\sum_{a,b,s,t=1}^{2m} \operatorname{R}_{ab t s}\operatorname{tr}\big[\widetilde{c}(u)\widetilde{c}(e_b)\widetilde{c}(v)\widetilde{c}(e_a)c(e_s) c(e_t)\big]{\rm Vol}(S^{n-1})\nonumber\\
	=&0.\nonumber
\end{align}

\noindent{\bf (I-4-$\mathbb{C}$)}

\begin{align}
	&\int_{\|\xi\|=1}\operatorname{tr} \biggl\{-\frac{ m }{4}\|\xi\|^{-2 m-2} \sum_{a,b,j,s,t=1}^{2m} \operatorname{R}_{b a t s}\xi_{a}\delta^{b}_{j}\bigg(\widetilde{c}(u)\widetilde{c}(dx_j)\widetilde{c}(v)\widetilde{c}(\xi) \nonumber\\ 
&+\widetilde{c}(u)\widetilde{c}(\xi)\widetilde{c}(v)\widetilde{c}(dx_j)\bigg) \widehat{c}(e_s)\widehat{ c}(e_t)\biggr\}(x_0)\sigma(\xi)\nonumber\\
	=&\int_{\|\xi\|=1}\operatorname{tr} \biggl\{-\frac{ m }{4}\|\xi\|^{-2 m-2} \sum_{a,b,s,t,g=1}^{2m} \operatorname{R}_{b a t s}\xi_{a}\xi_{g}\bigg(\widetilde{c}(u)\widetilde{c}(e_b)\widetilde{c}(v)\widetilde{c}(e_g) \nonumber\\
&+\widetilde{c}(u)\widetilde{c}(e_g)\widetilde{c}(v)\widetilde{c}(e_b) \bigg)\widehat{c}(e_s)\widehat{ c}(e_t)\biggr\}(x_0)\sigma(\xi)\nonumber\\
=&-\frac{1 }{8}\sum_{a,b,s,t=1}^{2m} \operatorname{R}_{b a t s}\operatorname{tr}\big[\widetilde{c}(u)\widetilde{c}(e_b)\widetilde{c}(v)\widetilde{c}(e_a)\widehat{c}(e_s)\widehat{ c}(e_t)\big] {\rm Vol}(S^{n-1})\nonumber\\
&-\frac{1 }{8}\sum_{a,b,s,t=1}^{2m} \operatorname{R}_{a b t s}\operatorname{tr}\big[\widetilde{c}(u)\widetilde{c}(e_b)\widetilde{c}(v)\widetilde{c}(e_a)\widehat{c}(e_s)\widehat{ c}(e_t)\big] {\rm Vol}(S^{n-1})\nonumber\\
	=&0.\nonumber
\end{align}
Thus, we get
\begin{align}
	&\int_{\|\xi\|=1} \operatorname{tr}\bigg[-i \sum_{j=1}^{2m} \partial_{\xi_{j}}\big[\sigma_{2}(\mathcal{P} \mathcal{Q})\big] \partial_{x_{j}}\big[\sigma_{-2 m-1}(\widetilde{D}_0^{-2 m})\big]\bigg] (x_0)\sigma(\xi)\nonumber\\
	=&a_0^2b_0^2\bigg(\frac{4}{3}\operatorname{Ric}(u,v)-\frac{2}{3}s g(u,v)\bigg) {\rm Vol}(S^{n-1}){\rm tr}[id].
\end{align}

\noindent{\bf (I-5)} For $-i \sum_{j=1}^{2m} \partial_{\xi_{j}}\big[\sigma_{1}(\mathcal{P} \mathcal{Q})\big] \partial_{x_{j}}\big[\sigma_{-2 m}(\widetilde{D}_0^{-2 m})\big](x_{0})$:
\begin{align}\label{1-2m}
	\partial_{x_{j}}\big[\sigma_{-2 m}(\widetilde{D}_0^{-2 m})\big](x_0)
	=&\partial_{x_{j}}\bigg[\|\xi\|^{-2 m-2}\sum_{a,b,l,k=1}^{2m} \left(\delta_{a b}-\frac{m}{3} R_{a l b k} x^{l} x^{k}\right) \xi_{a} \xi_{b}\bigg](x_0)\nonumber\\
	=&0,\nonumber
\end{align}
so
\begin{align}
	&\int_{\|\xi\|=1} \operatorname{tr}\bigg[-i\sum_{j=1}^{2m} \partial_{\xi_{j}}\big[\sigma_{1}(\mathcal{P} \mathcal{Q})\big] \partial_{x_{j}}\big[\sigma_{-2 m}(\widetilde{D}_0^{-2 m})\big]\bigg] (x_0)\sigma(\xi)=0.\nonumber
\end{align}

\noindent{\bf (I-6)} For $-\frac{1}{2} \sum_{j ,l=1}^{2m} \partial_{\xi_{j}} \partial_{\xi_{l}}\big[\sigma_{2}(\mathcal{P} \mathcal{Q})\big] \partial_{x_{j}} \partial_{x_{l}}\big[\sigma_{-2 m}(\widetilde{D}_0^{-2 m})\big](x_{0})$:

\begin{align}
\sum_{j, l=1} ^{2m}\partial_{\xi_{j}} \partial_{\xi_{l}}\big[\sigma_{2}(\mathcal{P} \mathcal{Q})\big](x_{0})
=&\sum_{j, l=1} ^{2m}\partial_{\xi_{j}} \partial_{\xi_{l}}\big[-\widetilde{c}(u)\widetilde{c}(\xi)\widetilde{c}(v)\widetilde{c}(\xi)\big]\nonumber\\
=&\sum_{j, l=1} ^{2m}\big[-\widetilde{c}(u)\widetilde{c}(dx_l)\widetilde{c}(v)\widetilde{c}(dx_j)-\widetilde{c}(u)\widetilde{c}(dx_j)\widetilde{c}(v)\widetilde{c}(dx_l)\big],
\end{align}
and
\begin{align}
\sum_{j, l=1} ^{2m}\partial_{x_{j}} \partial_{x_{l}}\big[\sigma_{-2 m}(\widetilde{D}_0^{-2 m})\big](x_{0})
&=\sum_{j, l=1} ^{2m}\partial_{x_{j}} \partial_{x_{l}}\bigg[\|\xi\|^{-2 m-2}\sum_{a,b,\hat{j},k=1}^{2m} \left(\delta_{a b}-\frac{m}{3} R_{a \hat{j} b k} x^{\hat{j}} x^{k}\right) \xi_{a} \xi_{b}\bigg]\nonumber\\
&=-\frac{m}{3}\|\xi\|^{-2 m-2}\sum_{j, l,a,b,\hat{j},k=1} ^{2m}\bigg(R_{a\hat{j}bk}\delta_l^{\hat{j}}\delta_j^k-R_{a\hat{j}bk}\delta_l^k\delta_j^{\hat{j}}\bigg)\xi_a\xi_b.
\end{align}
So
\begin{align}\label{1-2m}
	&-\frac{1}{2} \sum_{j, l=1} ^{2m}\partial_{\xi_{j}} \partial_{\xi_{l}}\big[\sigma_{2}(\mathcal{P} \mathcal{Q})\big] \partial_{x_{j}} \partial_{x_{l}}\big[\sigma_{-2 m}(\widetilde{D}_0^{-2 m})\big](x_{0})\nonumber\\
	=&-\frac{m}{6}\|\xi\|^{-2 m-2}\sum_{j, l,a,b,\hat{j},k=1} ^{2m}\|\xi\|^{-2 m-2}\bigg(R_{a\hat{j}bk}\delta_l^{\hat{j}}\delta_j^k+R_{a\hat{j}bk}\delta_l^k\delta_j^{\hat{j}}\bigg)\xi_a\xi_b\widetilde{c}(u)\widetilde{c}(dx_l)\widetilde{c}(v)\widetilde{c}(dx_j)\nonumber\\
&-\frac{m}{6}\|\xi\|^{-2 m-2}\sum_{j, l,a,b,\hat{j},k=1} ^{2m}\|\xi\|^{-2 m-2}\bigg(R_{a\hat{j}bk}\delta_l^{\hat{j}}\delta_j^k+R_{a\hat{j}bk}\delta_l^k\delta_j^{\hat{j}}\bigg)\xi_a\xi_b\widetilde{c}(u)\widetilde{c}(dx_j)\widetilde{c}(v)\widetilde{c}(dx_l),
\end{align}
then
\begin{align}
	&\int_{\|\xi\|=1} {\rm tr}\biggl\{-\frac{1}{2} \sum_{j, l=1} ^{2m}\partial_{\xi_{j}} \partial_{\xi_{l}}\big[\sigma_{2}(\mathcal{P} \mathcal{Q})\big] \partial_{x_{j}} \partial_{x_{l}}\big[\sigma_{-2 m}(\widetilde{D}_0^{-2 m})\big]\biggr\}(x_0)\sigma(\xi)\nonumber\\
	=&\int_{\|\xi\|=1} \biggl\{-\frac{m}{6}\|\xi\|^{-2 m-2}\sum_{a,b,j,l=1}^{2m}\bigg({\rm R}_{albj}+{\rm R}_{ajbl} \bigg)\xi_{a}\xi_{b}{\rm tr}\big[\widetilde{c}(u)\widetilde{c}(dx_l)\widetilde{c}(v)\widetilde{c}(dx_j)\big]\biggr\}(x_0)\sigma(\xi)\nonumber\\
&+\int_{\|\xi\|=1} \biggl\{-\frac{m}{6}\|\xi\|^{-2 m-2}\sum_{a,b,j,l=1}^{2m}\bigg({\rm R}_{albj}+{\rm R}_{ajbl} \bigg)\xi_{a}\xi_{b}{\rm tr}\big[\widetilde{c}(u)\widetilde{c}(dx_j)\widetilde{c}(v)\widetilde{c}(dx_l)\big]\biggr\}(x_0)\sigma(\xi)\nonumber\\
	=&-\frac{1}{6}\sum_{a,j,l=1}^{2m}{\rm R}_{alaj}{\rm tr}\big[\widetilde{c}(u)\widetilde{c}(e_l)\widetilde{c}(v)\widetilde{c}(e_j)+\widetilde{c}(u)\widetilde{c}(e_j)\widetilde{c}(v)\widetilde{c}(e_l)\big]{\rm Vol}(S^{n-1})\nonumber\\
	=&a_0^2b_0^2\bigg(\frac{1}{3}s g(u,v)-\frac{2}{3}{\rm Ric}(u,v)\bigg) {\rm Vol}(S^{n-1}){\rm tr}[id].
\end{align}
In summary, we get
\begin{align}\label{zabdt}
	&\int_{\|\xi\|=1} {\rm tr}\biggl\{	\sigma_{-2 m}(\mathcal{P} \mathcal{Q} \widetilde{D}_0^{-2 m})\biggr\}(x_0)\sigma(\xi)\nonumber\\
	=&a_0^2b_0^2\bigg(\frac{2-m}{12} s g(u,v)-\frac{1}{6}{\rm Ric}(u,v)\bigg) {\rm Vol}(S^{n-1}){\rm tr}[id].
\end{align}
Since ${\rm tr}[id]=2^{2m}$ and ${\rm Vol}(S^{n-1})=\frac{2 \pi^{m}}{\Gamma\left(m\right)}$, we obtain
\begin{align}\label{z2}
	\mathscr{N}_{1}=&(a_0b_0)^{-m}\mathrm{Wres}\bigg(\widetilde{c}(u)\widetilde{D}\widetilde{c}(v)\widetilde{D}\widetilde{D}_0^{-2m}\bigg)\nonumber\\
	=&(a_0b_0)^{-m+2}2^{2m} \frac{2 \pi^{m}}{\Gamma\left(m\right)}\int_{M}\bigg(\frac{2-m}{12} s g(u,v)-\frac{1}{6}{\rm Ric}(u,v)\bigg)d{\rm Vol}_M.
\end{align}

{\bf Part II)} $\mathscr{N}_{2}=(a_0b_0)^{-m+1}\mathrm{Wres}\bigg(\widetilde{c}(u)\widetilde{c}(v)\widetilde{D}_0^{-2m+2}\bigg)$.

By \eqref{wers}, we need to compute  $\int_{\wedge^*T^*M} \operatorname{tr}\left[\sigma_{-2 m}\left(\widetilde{c}(u)\widetilde{c}(v) \widetilde{D}_0^{-2 m+2}\right)\right](x, \xi) $. Based on the algorithm yielding the principal symbol of a product of pseudo-differential operators in terms of the principal symbols of the factors,  by Lemma \ref{lem2}, we have
\begin{align}
	\sigma_{-2m}(\widetilde{D}_0^{-2 m+2})=& \frac{m(m-1)}{3}\|\xi\|^{-2 m-2}\sum_{a,b=1}^{2m} \operatorname{Ric}_{a b} \xi_{a} \xi_{b}\nonumber\\
		&-\frac{ m (m-1)}{4}\|\xi\|^{-2 m-2} \sum_{a,b,s,t=1}^{2m}\operatorname{R}_{b a t s} c(e_s) c(e_t) \xi_{a} \xi_{b}\nonumber\\
&+\frac{ m (m-1)}{4}\|\xi\|^{-2 m-2} \sum_{a,b,s,t=1}^{2m}\operatorname{R}_{b a t s} \hat{c}(e_s) \hat{c}(e_t) \xi_{a} \xi_{b}\nonumber\\
		&-\frac{1}{8}(m-1)\|\xi\|^{-2 m}\sum_{i,j,k,l=1}^{2m}R_{ijkl}\widehat{c}(e_i) \widehat{c}(e_j)c(e_k) c(e_l)\nonumber\\
		&-\frac{1}{4}(m-1) \|\xi\|^{-2 m}s+O\left(\mathbf{x}\right).
\end{align}
Obviously, the general dimensional symbols of the $\widetilde{c}(u)\widetilde{c}(v)\widetilde{D}_0^{-2 m+2}$ are given:
\begin{align}\label{PD}
	\sigma_{-2 m}\left(\widetilde{c}(u)\widetilde{c}(v)\widetilde{D}_0^{-2 m+2}\right) =& \frac{m(m-1)}{3}\|\xi\|^{-2 m-2}\sum_{a,b=1}^{2m} \operatorname{Ric}_{a b} \xi_{a} \xi_{b}\widetilde{c}(u)\widetilde{c}(v) \nonumber\\
		&-\frac{ m (m-1)}{4}\|\xi\|^{-2 m-2} \sum_{a,b,s,t=1}^{2m}\operatorname{R}_{b a t s}\xi_{a} \xi_{b} \widetilde{c}(u)\widetilde{c}(v)c(e_s) c(e_t) \nonumber\\
&+\frac{ m (m-1)}{4}\|\xi\|^{-2 m-2} \sum_{a,b,s,t=1}^{2m}\operatorname{R}_{b a t s}\xi_{a} \xi_{b} \widetilde{c}(u)\widetilde{c}(v)\widehat{c}(e_s) \widehat{c}(e_t) \nonumber\\
		&-\frac{1}{8}(m-1)\|\xi\|^{-2 m}\sum_{i,j,k,l=1}^{2m}R_{ijkl}\widetilde{c}(u)\widetilde{c}(v)\widehat{c}(e_i) \widehat{c}(e_j)c(e_k) c(e_l)\nonumber\\
		&-\frac{1}{4}(m-1) \|\xi\|^{-2 m}s\widetilde{c}(u)\widetilde{c}(v)+O\left(\mathbf{x}\right).
\end{align}
Next, we integrate each of the above items respectively.
\begin{align}
	&{\bf(II-1)}~\int_{\|\xi\|=1}\operatorname{tr} \biggl\{\frac{m(m-1)}{3}\|\xi\|^{-2 m-2}\sum_{a,b=1}^{2m} \operatorname{Ric}_{a b} \xi_{a} \xi_{b}\widetilde{c}(u)\widetilde{c}(v)\biggr\}(x_0)\sigma(\xi)\nonumber\\
	&\;\;\;\;\;\;\;\;\;\;=-a_0b_0\frac{m-1}{6}s g(u,v){\rm Vol}(S^{n-1}){\rm tr}[id];\nonumber\\
	&{\bf(II-2)}\int_{\|\xi\|=1} {\rm tr}\biggl\{-\frac{ m (m-1)}{4}\|\xi\|^{-2 m-2} \sum_{a,b,s,t=1}^{2m}\operatorname{R}_{b a t s}\xi_{a} \xi_{b}\widetilde{c}(u)\widetilde{c}(v) c(e_s) c(e_t) \biggr\}(x_0)\sigma(\xi)\nonumber\\
	&\;\;\;\;\;\;\;\;\;\;=-\frac{ m-1}{8}\sum_{a,s,t=1}^{2m}\operatorname{R}_{a a t s}{\rm tr}\big[\widetilde{c}(u)\widetilde{c}(v) c(e_s) c(e_t)\big]{\rm Vol}(S^{n-1})  \nonumber\\
	&\;\;\;\;\;\;\;\;\;\;=0;\nonumber\\
	&{\bf(II-3)}\int_{\|\xi\|=1} {\rm tr}\biggl\{\frac{ m (m-1)}{4}\|\xi\|^{-2 m-2} \sum_{a,b,s,t=1}^{2m}\operatorname{R}_{b a t s}\xi_{a} \xi_{b}\widetilde{c}(u)\widetilde{c}(v) \widehat{c}(e_s) \widehat{c}(e_t) \biggr\}(x_0)\sigma(\xi)\nonumber\\
	&\;\;\;\;\;\;\;\;\;\;=0;\nonumber\\
	&{\bf(II-4)}\int_{\|\xi\|=1} {\rm tr}\biggl\{-\frac{1}{8}(m-1)\|\xi\|^{-2 m}\sum_{i,j,k,l=1}^{2m}R_{ijkl}\widetilde{c}(u)\widetilde{c}(v)\widehat{c}(e_i) \widehat{c}(e_j)c(e_k) c(e_l)\biggr\}(x_0)\sigma(\xi)\nonumber\\
	&\;\;\;\;\;\;\;\;\;\;=-\frac{1}{8}(m-1)\sum_{i,j,k,l=1}^{2m}R_{ijkl}{\rm tr}\big[\widetilde{c}(u)\widetilde{c}(v)\widehat{c}(e_i) \widehat{c}(e_j)c(e_k) c(e_l)\big]{\rm Vol}(S^{n-1})\nonumber\\
&\;\;\;\;\;\;\;\;\;\;=0;\nonumber\\
	&{\bf(II-5)}\int_{\|\xi\|=1} {\rm tr}\biggl\{-\frac{1}{4}(m-1) \|\xi\|^{-2 m}s\widetilde{c}(u)\widetilde{c}(v)\biggr\}(x_0)\sigma(\xi)\nonumber\\
	&\;\;\;\;\;\;\;\;\;\;=-\frac{1}{4}(m-1)s{\rm tr}\big[\widetilde{c}(u)\widetilde{c}(v)\big]{\rm Vol}(S^{n-1})\nonumber\\
&\;\;\;\;\;\;\;\;\;\;=\frac{a_0b_0}{4}(m-1)sg(u,v){\rm Vol}(S^{n-1}){\rm tr}[id].\nonumber
\end{align}
In summary, we get
\begin{align}\label{zpdt}
	&\int_{\|\xi\|=1} {\rm tr}\biggl\{\sigma_{-2 m}\left(\widetilde{c}(u)\widetilde{c}(v)  \widetilde{D}_0^{-2 m+2}\right)\biggr\}(x_0)\sigma(\xi)\nonumber\\
	=&a_0b_0\frac{m-1}{12}s g(u,v){\rm Vol}(S^{n-1}){\rm tr}[id].
\end{align}
Further, we obtain
\begin{align}\label{z1}
	\mathscr{N}_{2}=&(a_0b_0)^{-m+1}\mathrm{Wres}\bigg(\widetilde{c}(u)\widetilde{c}(v)\widetilde{D}_0^{-2m+2}\bigg)\nonumber\\
	=&(a_0b_0)^{-m+2}2^{2m} \frac{2 \pi^{m}}{\Gamma\left(m\right)}\int_{M}\frac{m-1}{12}s g(u,v)d{\rm Vol}_M.
\end{align}
Hence, by summing $\mathscr{N}_{1}$ and $\mathscr{N}_{2}$, Theorem \ref{thm} holds.
\end{proof}

\section{Examples}
\label{section:4}
Let $(\mathcal{A},\mathcal{H},D)$ be an $n$-summable unital non-self-adjoint spectral triple, where $\mathcal{A}$ is a noncommutative algebra with involution, acting in the Hilbert space $\mathcal{H}$ while $D$ is a Dirac operator, which maybe non-self-adjoint operator and such that
$[D,a]$ is bounded $\forall a \in \mathcal{A}$. We assume that there exists a generalised algebra of
pseudo-differential operators, which contains $\mathcal{A},D,$ $D^l$ for $l\in \mathbb{Z}$ and there exists a tracial state $\mathcal{W}$  on it, called a noncommutative residue. Moreover, we assume that the noncommutative residue identically vanishes on $TD^{-k}$ for any $k>2m$ and a zero-order operator $T$. Similar to \cite{DL}, we can define a spectral metric functional and a spectral Einstein functional by similar formulas.

\begin{pro}
$\Big(C^{\infty}(M),a_0d+b_0\delta,L^2(M,\wedge^*T^*M)\Big)$ is a non-self-adjoint elliptic spectral triple. When $a_0b_0>0$, it is $\theta$-summable.
\end{pro}
\begin{ex}{\bf $d$-dimensional noncommutative torus}\\
\indent $\mathcal{A}\big(T_\theta^d\big)$ is the algebra of smooth functions on the $d$-dimensional noncommutative torus. $h\in \mathcal{A}\big(T_\theta^d\big)$ is positive element with a bounded inverse. $V$ is a $d$-dimensional complex vector space. $\mathcal{A}\big(T_\theta^d\big)\otimes\wedge^*V$, where $\delta_1,...,\delta_d$ are derivations on $\mathcal{A}\big(T_\theta^d\big)$, $e_1,...,e_d$ are the orthonormal basis of $V$. 

Take $\widetilde{D}=h\big(\Sigma_{j=1}^d\widetilde{c}(e_j)\delta_j\big)h$, which is a non-self-adjoint elliptic operator, then $\Big(\mathcal{A}\big(T_\theta^d\big),\widetilde{D},\mathcal{A}\big(T_\theta^d\big)\otimes\wedge^*V\Big)$ is a noncommutative non-self-adjoint elliptic spectral triple. When $a_0b_0>0$, the spectral triple is $\theta$-summable. So it's interesting to calculate its spectral Einstein functional.
\end{ex}

\begin{ex}
$\widetilde{D}=D+c(V)$, $D$ is Dirac operator. $\Big(C^{\infty}(M),D+c(V),L^2\big(M,S(TM)\big)\Big)$, $M$ is a closed spin manifold, $D$ is Dirac operator, $c(V)$ is a Clifford action. Then it's a non-self-adjoint spectral triple which is $\theta$-summable.
\end{ex}

\begin{ex}
$(\mathcal{A},\mathcal{H},D)$ is a spectral triple. $c(V)=a_0[D,a_1]$, where $a_0, a_1\in\mathcal{A}$, $c(V)$ is not self-adjoint, then $\big(\mathcal{A},\mathcal{H},D+c(V)\big)$ is a non-self-adjoint spectral triple.
\end{ex}

\begin{ex}{\bf Almost commutative $M\times \mathbb{Z}_2$}\\
\indent We assume that $M$ is a closed spin manifold and $\big(C^\infty(M),D,\mathcal{H}\big)$ is an even spectral triple of dimension $n=2m$ with the standard Dirac
 operator $D$ and a grading $\Gamma$. We consider the usual double-sheet spectral triple, $\Big(C^\infty(M)\otimes \mathbb{C}^2,\widetilde{D},L^2\big(M,S(TM)\big)\Big)\oplus L^2\big(M,S(TM)\big)$ with the Dirac operator,
 \[
\widetilde{D}=\begin{bmatrix}
 D+c(V)&0\\
0&D+c(V) \\
\end{bmatrix}+\Gamma\otimes \begin{bmatrix}
0&\Phi\\
\Phi^*&0 \\
\end{bmatrix}=
\begin{bmatrix}
 D+c(V)&\Gamma\Phi\\
\Gamma\Phi^*&D+c(V) \\
\end{bmatrix},
\]
 where $\Phi\in \mathbb{C}.$ Then\[
\begin{bmatrix}
    \widetilde{D},&
    \begin{bmatrix}  f_1&0\\
0& f_2 \end{bmatrix}
\end{bmatrix}
=
\begin{bmatrix}
c(df_1)&\Phi(f_2-f_1)\Gamma\\
    \Phi^*(f_1-f_2)\Gamma& c(df_2)
\end{bmatrix},\]
where $f_1,f_2\in C^\infty(M).$ This is a noncommutative non-self-adjoint spectral triple.
\end{ex}

\section*{Declarations}
\textbf{Conflict of interest} The authors declare no conflicts of interest.

\section*{Acknowledgements}
This work was supported by the National Natural Science Foundation of China (No.11771070).

\end{document}